\numberwithin{equation}{section}
\newtheorem{theo}{Theorem}[section]
\newtheorem{lem}[theo]{Lemma}
\newtheorem{remark}{Remark}
\newtheorem{conj}[theo]{Conjecture}
\begin{document}

\newcommand{\E}{\mathbb{EX}}
\newcommand{\EX}{{\rm EX}}
\newcommand{\ex}{{\rm ex}}
\title{Generalized Tur\'an problem for a path and a clique}
\author{Xiaona Fang$^1$, Xiutao Zhu$^2$, Yaojun Chen$^{1,}$\thanks{Corresponding author: yaojunc@nju.edu.cn}\\
{\small $^1$Department of Mathematics, Nanjing University,}\\ {\small Nanjing, 210093, P.R. China}\\
{\small $^2$School of Mathematics, Nanjing University of Aeronautics and Astronautics,}\\ {\small Nanjing, 211106, P.R. China}
}
\date{}
\maketitle

\begin{abstract}
Let $\mathcal{H}$ be a family of graphs. The generalized Tur\'an number  $\ex(n, K_r, \mathcal{H})$ is the maximum number of copies of the clique $K_r$ in any $n$-vertex $\mathcal{H}$-free graph. In this paper, we determine the value of $\ex(n, K_r, \{P_k, K_m \} )$  for sufficiently large $n$ with an exceptional  case, and characterize all  corresponding extremal graphs, which generalizes and strengthens   the results of Katona and Xiao [EJC, 2024] on $\ex(n, K_2, \{P_k, K_m \} )$. For the exceptional case, we obtain a tight  upper bound for $\ex(n, K_r, \{P_k, K_m \} )$ that  confirms a conjecture on $\ex(n, K_2, \{P_k, K_m \} )$  posed by Katona and Xiao.

%The same maximum under the additional condition that the graphs are connected is $ex_{con}(n, K_r, \mathcal{H})$.
%determined $ex(n, \{P_k, K_m \})$.

\vskip 2mm
\noindent{\bf Keywords}: Generalized Tur\'an number, extremal graph, path.
%%linear forest;
%\vskip 2mm
%\noindent{\bf AMS classification: } 05C35
\end{abstract}

\section{Introduction}\label{sec1}
\hspace{1.5em}Let $\mathcal{H}$ be a family of graphs. A graph is called $\mathcal{H}$-free if it does not contain any member of $\mathcal{H}$ as a subgraph.
The Tur\'an number of $\mathcal{H}$, denoted by $\ex(n, \mathcal{H})$, is the maximum number of edges in any $n$-vertex $\mathcal{H}$-free graph. If we ask the maximum
under the condition that the graphs are connected, then denote it by
$\ex_{con}(n, \mathcal{H})$.
In 1941, Tur\'an \cite{turan1} proved that the unique extremal graph of $\ex(n, K_{k+1})$ is the balanced complete $k$-partite graph on $n$ vertices, denoted by $T(n,k)$. Since then, the Tur\'an problem has attracted a lot of attention and some related problems have been studied widely.

For convenience to state, we  define some graphs which will be mentioned many times. Let $K_n$, $P_n$, $I_n$ be a complete graph, a path and an empty graph on $n$ vertices, respectively. For two disjoint graphs $G$ and $H$, let $G \cup H$ denote the disjoint union of $G$ and $H$ and $kG$ denote the disjoint union of $k$ copies of $G$. Denote by $G\vee H$ the graph obtained from $G \cup H$ by adding edges between all vertices of $G$ and all vertices of $H$. Let $\delta_k=\lfloor\frac{k}{2}\rfloor-1$ and define
 $$	H_n(m,k)=  \begin{cases}
	T( \delta_k,m-2 ) \vee I_{ n- \delta_k} , & \!\! \text{ if } m\leq \delta_k +2 ,\\
		K_{\delta_k } \vee I_{n- \delta_k} ,	&\!\! \text{ if }  \delta_k +2 < m < k  \text{ and } k \text{ is even},  \\
        K_{\delta_k} \vee \left( I_{n-\delta_k-2 } \cup K_2 \right) ,	&\!\! \text{ if }  \delta_k+2< m < k \text{ and } k \text{ is odd}.
\end{cases}$$
When $m-2\le \delta_k\le 2m-5$ and $k$ is odd, let $H_n^-(m,k)$ be the graph obtained from $T( \delta_k,m-2 ) \vee \left( I_{n-\delta_k-2} \cup K_2 \right)$ by deleting an edge with one end vertex in $V(K_2)$ and another end vertex in a part of $T( \delta_k,m-2 )$ whose size is one.

The Tur\'an problem on a path can be tracked back to Erd\H{o}s-Gallai theorem.
\begin{theo}(Erd\H{o}s and Gallai \cite{Erdos1959})
For a path $P_k$, $\ex(n, P_k) \leq  \frac{n}{k-1} \binom{k-1}{2}$ and the equality holds if and only if $(k-1) | n$.
\end{theo}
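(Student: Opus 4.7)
\smallskip
\noindent\textbf{Proof proposal.} The plan is to prove the bound and its equality case simultaneously by induction on $n$, via the longest-path rotation argument. For the base case $n\le k-1$, the trivial bound $e(G)\le\binom{n}{2}$ yields $e(G)\le\frac{(k-2)n}{2}$, with equality only when $n=k-1$ and $G=K_{k-1}$.

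For the inductive step with $n \ge k$, I would distinguish two cases. If $G$ is disconnected, pick a component $C$ and apply induction to $C$ and $G-C$: the two bounds sum to $\frac{(k-2)n}{2}$, and equality throughout forces $(k-1)$ to divide the order of each piece and each piece to be a disjoint union of $K_{k-1}$'s. If instead $G$ is connected, take a longest path $P = v_0 v_1 \cdots v_\ell$, so $\ell \le k-2$ and $N(v_0) \cup N(v_\ell) \subseteq V(P)$. A standard rotation argument shows that for each $i\in\{1,\dots,\ell\}$ at most one of $v_0 v_i$ and $v_{i-1}v_\ell$ is an edge: otherwise those two edges together with the rest of $P$ form a Hamilton cycle $H$ on $V(P)$, and since $G$ is connected with $n > |V(P)|$, any vertex $w$ outside $V(P)$ adjacent to $H$ produces a path on $|V(P)|+1$ vertices in $G$, contradicting the maximality of $P$. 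Hence $d(v_0)+d(v_\ell) \le \ell \le k-2$, so some endpoint $v$ satisfies $d(v) \le \lfloor (k-2)/2 \rfloor$; applying induction to $G-v$ yields
\[
e(G) \;\le\; e(G-v) + \lfloor(k-2)/2\rfloor \;\le\; \frac{(k-2)(n-1)}{2} + \frac{k-2}{2} \;=\; \frac{(k-2)n}{2}.
\]

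For the equality characterization the task is to exclude equality in the connected case. If equality held there, then $G-v$ would be extremal on $n-1$ vertices, hence a disjoint union of $K_{k-1}$'s; but then the edge from $v$ into one such $K_{k-1}$ extends, via a Hamilton path of that clique starting at $v$'s neighbor, to a $P_k$ in $G$, a contradiction. Thus equality forces the disconnected case, and iterating the component decomposition yields $G \cong \frac{n}{k-1} K_{k-1}$ with $(k-1) \mid n$. The main obstacle I expect is precisely this equality analysis: the degree-sum bound and the induction itself are routine, but ruling out tightness in the connected case requires the explicit path-construction just described. (When $k$ is odd, the inequality $\lfloor(k-2)/2\rfloor < (k-2)/2$ is already strict, so the exclusion is automatic; the delicate argument is needed only for even $k$.)
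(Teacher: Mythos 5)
The paper does not prove this statement: it is quoted as Theorem~1.1 and attributed to Erd\H{o}s and Gallai \cite{Erdos1959}, so there is no in-paper proof to compare against. Judged on its own, your argument is the standard induction-on-$n$ proof via the longest-path rotation lemma, and it is correct: the base case, the component decomposition, the bound $d(v_0)+d(v_\ell)\le \ell\le k-2$ (each index $i$ contributes at most one of $v_0v_i$, $v_{i-1}v_\ell$, since both together with $P$ minus $v_{i-1}v_i$ form a Hamilton cycle on $V(P)$, which extends past $V(P)$ by connectivity and $n>|V(P)|$), and the exclusion of equality in the connected case by prepending $v$ to a Hamilton path of a clique component of $G-v$ all check out. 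The one point you should make explicit is that the induction hypothesis must be the \emph{strengthened} statement that every $n'$-vertex $P_k$-free graph attaining $\frac{(k-2)n'}{2}$ edges is a disjoint union of copies of $K_{k-1}$; the bare conclusion ``equality iff $(k-1)\mid n'$'' applied to $G-v$ would only give $(k-1)\mid(n-1)$ and would not license the step ``hence a disjoint union of $K_{k-1}$'s'' that your contradiction relies on. Since your base case and disconnected case do establish this stronger claim, the induction is self-consistent, but the hypothesis should be stated in that form from the outset.
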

Later, Faudree and Schelp \cite{Faudree} extended this result by determining the exact value of $\ex(n,P_k)$ for all $n$. In  a stronger case, Kopylov \cite{Kopylov}, and independently Balister et al. \cite{Balister}, studied the Tur\'an problem  of a path under the condition of connectivity.
\begin{theo}(Kopylov \cite{Kopylov}, Balister, Gy\H{o}ri, Lehel and Schelp \cite{Balister})\label{thl.5}
For $n>k\ge 4$,
\[\ex_{con}(n,P_k)=\max\left\{\binom{k-2}{2}+(n-k+2),\binom{\lceil k/2\rceil}{2}+\left\lfloor\frac{k-2}{2}\right\rfloor\left(n-\left\lceil\frac{k}{2}\right\rceil\right)\right\}.\]
Moreover, the extremal graph is either $K_1\vee(K_{k-3}\cup I_{n-k+2})$ or $H_n(k,k)$.
\end{theo}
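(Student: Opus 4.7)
The plan is to prove both directions separately. For the lower bound, I would verify that $G_1 := K_1 \vee (K_{k-3} \cup I_{n-k+2})$ and $H_n(k,k)$ are connected and $P_k$-free, and count their edges. In $G_1$ the apex is a cut vertex, so every path must enter and leave through it and uses at most $1+(k-3)+1=k-1$ vertices; the edge count is $(n-1)+\binom{k-3}{2}=\binom{k-2}{2}+(n-k+2)$. In $H_n(k,k)=K_{\delta_k}\vee F$ (with $F=I_{n-\delta_k}$ for even $k$ and $F=I_{n-\delta_k-2}\cup K_2$ for odd $k$), the fact that consecutive $F$-vertices on a path must be joined by an edge of $F$ caps the longest path at $k-1$ in either parity, and the edge count works out to $\binom{\lceil k/2\rceil}{2}+\lfloor (k-2)/2\rfloor (n-\lceil k/2\rceil)$.

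For the upper bound, take an extremal connected $P_k$-free graph $G$ and split on whether $G$ is $2$-connected. If $G$ has a cut vertex $v$, write $G$ as a union of connected subgraphs sharing only $v$, each of which is itself $P_k$-free; concatenating longest paths emerging from $v$ in two different pieces yields a long path in $G$, forcing tight constraints on their lengths. Optimizing edge counts under these constraints (the densest configuration has each piece minus $v$ a clique of size at most $k-3$) yields $e(G)\leq \binom{k-2}{2}+(n-k+2)$ with equality only at $G_1$. If $G$ is $2$-connected, I would apply a Bondy--Chv\'{a}tal style closure: whenever $u,v$ are non-adjacent with $d(u)+d(v)\geq k-1$, a rotation along a longest $u$--$v$ path produces a $P_k$, so every non-adjacent pair has degree sum at most $k-2$. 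Consequently the vertices of degree greater than $\lfloor (k-2)/2\rfloor$ form a clique $A$; each remaining vertex has degree at most $\lfloor (k-2)/2\rfloor$, and the pieces outside $A$ must avoid creating a $P_k$ through $A$, which forces them to be almost empty. Maximizing over $|A|=a$ recovers the second quantity and singles out $H_n(k,k)$.

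The hardest step is the $2$-connected case: the closure bound on degree sums is only a starting point, and the real work is to show that low-degree vertices attach essentially only to $A$, ruling out configurations in which a long path could be built by chaining short pieces through $A$. Handling the parity of $k$, which determines whether the complement of $A$ contains a $K_2$ or is entirely independent, and tracing through which closure operations are actually strict so that uniqueness of $H_n(k,k)$ follows, is typically the most delicate part of Kopylov-type arguments and is where I expect to spend most of the effort.
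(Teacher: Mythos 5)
First, a remark on scope: the paper does not prove this statement at all --- it is quoted from \cite{Kopylov} and \cite{Balister} --- so there is no in-paper proof to compare against and your proposal has to stand on its own. The lower-bound half (verifying that $K_1\vee(K_{k-3}\cup I_{n-k+2})$ and $H_n(k,k)$ are connected, $P_k$-free, and have the stated edge counts) is fine. The fatal problem is the key step of your $2$-connected case: the claim that in a $2$-connected $P_k$-free graph every non-adjacent pair $u,v$ satisfies $d(u)+d(v)\le k-2$, and hence that the vertices of degree greater than $\lfloor (k-2)/2\rfloor$ form a clique $A$. This is false. The complete bipartite graph $K_{\delta_k,\,n-\delta_k}$ with $\delta_k=\lfloor k/2\rfloor-1\ge 2$ is $2$-connected and $P_k$-free (its longest path has $2\delta_k+1\le k-1$ vertices), yet the $\delta_k$ vertices of the small side are pairwise non-adjacent, each of degree $n-\delta_k$, so a non-adjacent pair can have degree sum $2(n-\delta_k)$, which is unbounded; concretely, $K_{2,n-2}$ is $2$-connected and $P_6$-free with a non-adjacent pair of degree sum $2n-4$. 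The rotation argument you invoke only controls the degree sum of the two \emph{endpoints of a longest path} (all of whose neighbours lie on that path); for an arbitrary non-adjacent pair $u,v$, a longest $u$--$v$ path need not contain $N(u)\cup N(v)$ --- in $K_{2,n-2}$ the longest path joining the two high-degree vertices has only three vertices --- so no crossing/closure argument is available. Since the rest of your plan (``low-degree vertices attach essentially only to the clique $A$'') is built on this false lemma, the $2$-connected case does not go through as sketched. The cut-vertex case is also glossed (the pieces at a cut vertex are constrained not merely to be $P_k$-free but to have short paths \emph{starting at} $v$, so the optimization is not a clean induction on the same statement), but that part is repairable.

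For reference, the known proofs take a genuinely different route. Kopylov \cite{Kopylov} (and Luo's generalization, Theorem~\ref{luo}, which is the engine of this paper's Section~\ref{sec4}) first reduces the connected path problem to a circumference problem for $2$-connected graphs by adding a dominating vertex, and then applies iterated deletion of low-degree vertices (the disintegration/core technique, exactly the $(\delta_k-1)$-disintegration used in the proof of Theorem~\ref{gener}): one shows the resulting core is either small and dense or has large minimum degree, and analyzes those two outcomes directly, rather than trying to extract a clique of high-degree vertices via a closure operation. If you want to salvage your outline, the degree-sum information you should use is Lemma~\ref{lem5}-type statements ($\sigma_3$ or longest-path-endpoint bounds) or the Erd\H{o}s--Gallai Lemma~\ref{lem2}, not an Ore-type condition applied to arbitrary non-adjacent pairs.
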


Recently, Katona and Xiao \cite{katona2023} considered the Tur\'an problem when a path and a clique are forbidden at the same time, i.e., $\ex_{con}(n,\{P_k,K_m\})$ and $\ex(n,\{P_k,K_m\})$. They obtained the following two results, where $t(n,k)$ is the size of $T(n,k)$.
\begin{theo}(Katona and Xiao \cite{katona2023})\label{th1.3} For $k> m$ and sufficiently large $n$,
	$$\ex_{con}(n, \{P_k, K_m\} )=\delta_k  \cdot n+ t \left(\delta_k,m-2 \right)- \delta_k^2.$$
Moreover, $T( \delta_k,m-2 )\vee I_{n- \delta_k}$ is one of the extremal graphs.
\end{theo}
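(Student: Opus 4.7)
The plan is to verify the theorem in two directions. For the lower bound, I would take $G_0 = T(\delta_k, m-2) \vee I_{n-\delta_k}$ and check that it is a connected $\{P_k, K_m\}$-free graph with exactly $t(\delta_k, m-2) + \delta_k(n-\delta_k) = \delta_k n + t(\delta_k, m-2) - \delta_k^2$ edges. Connectedness is immediate from the join. For $K_m$-freeness, any clique of $G_0$ uses at most one vertex of $I_{n-\delta_k}$ together with a clique of the Tur\'an side (of size at most $m-2$), hence $\omega(G_0)\le m-1$. For $P_k$-freeness, since $I_{n-\delta_k}$ is independent, a path in $G_0$ must visit the $T(\delta_k, m-2)$ side between any two consecutive ``outside'' vertices, so it contains at most $2\delta_k+1\le k-1$ vertices.

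For the upper bound, let $G$ be a connected $\{P_k,K_m\}$-free graph on $n$ vertices with $e(G)$ maximum. Since $G$ is in particular $P_k$-free, Theorem~\ref{thl.5} applies; for $n$ large the linear term in Kopylov's bound dominates and forces $G$ to be structurally close to $K_{\delta_k}\vee I_{n-\delta_k}$. Concretely, I would run a P\'osa-style rotation-extension argument on a longest path $P=v_1v_2\cdots v_\ell$ of $G$ to extract a set $S\subseteq V(G)$ of size $\delta_k$ such that every vertex of $V(G)\setminus S$ is adjacent to every vertex of $S$ and $V(G)\setminus S$ is independent. The key inputs are: (i) rotations produce many distinct endpoints of paths of the same length, and these endpoints must be pairwise non-adjacent, for otherwise a Hamilton cycle among vertices of $P$ together with one vertex outside $P$ would yield a $P_k$; (ii) the extremality of $e(G)$ together with $n$ sufficiently large rule out any missing cross-edge between $S$ and $V(G)\setminus S$ or any excess edge inside $V(G)\setminus S$.

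Once this structure is in place, the $K_m$-free hypothesis forces $G[S]$ to be $K_{m-1}$-free, since any $K_{m-1}$ in $G[S]$ could be extended by any vertex of $V(G)\setminus S$ (which is joined to all of $S$) to a $K_m$ in $G$. Tur\'an's theorem then gives $e(G[S])\le t(\delta_k,m-2)$, with equality precisely when $G[S]=T(\delta_k,m-2)$. Summing the $\delta_k(n-\delta_k)$ cross-edges, the $t(\delta_k,m-2)$ core edges, and the $0$ edges inside $V(G)\setminus S$ yields $e(G)\le \delta_k n + t(\delta_k,m-2) - \delta_k^2$, and pinpoints $T(\delta_k,m-2)\vee I_{n-\delta_k}$ as an extremal graph.

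The chief obstacle is the structural lemma that isolates the $\delta_k$-dominating set $S$. In the regime $m\ge \delta_k+2$, Kopylov's extremal graph $K_{\delta_k}\vee I_{n-\delta_k}$ (or its odd-$k$ counterpart) is itself $K_m$-free, so Theorem~\ref{thl.5} yields the bound essentially for free. The more subtle regime is $m\le \delta_k+1$, where the claimed bound is strictly smaller than $\ex_{con}(n,P_k)$; here a stability refinement of Kopylov's theorem is required, showing that any near-extremal $P_k$-free connected graph is in fact of the form $A\vee I_{n-\delta_k}$ with $|A|=\delta_k$, after which the $K_m$-constraint converts into a Tur\'an bound on $G[A]$. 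Carrying out this stability step carefully---controlling the edge loss caused by each missing cross-edge or each stray edge inside the independent side using the $P_k$-freeness constraint---is where the bulk of the technical work lies.
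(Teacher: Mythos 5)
Your proposal has a genuine gap, and it sits exactly where the statement itself is delicate. The paper does not reprove Theorem~\ref{th1.3}; it derives a corrected form of it from Theorem~\ref{gener} with $r=2$, and Remark~1 explicitly observes that Theorem~\ref{th1.3} as stated is \emph{false} when $k$ is odd and $\delta_k<m-2$: the graph $K_{\delta_k}\vee(I_{n-\delta_k-2}\cup K_2)$ is connected, $K_m$-free (its clique number is $\delta_k+2\le m-1$) and $P_k$-free (its longest path has $2\delta_k+2=k-1$ vertices), yet it has $\delta_k n+t(\delta_k,m-2)-\delta_k^2+1$ edges, one more than your claimed upper bound. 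For a concrete instance take $k=5$, $m=4$: the star $K_{1,n-1}$ plus one edge between two leaves has $n$ edges while the formula gives $n-1$. The step of your argument that breaks is item (ii) of your structural claim: you assert that extremality rules out ``any excess edge inside $V(G)\setminus S$.'' For odd $k$ the independent side has $2(\delta_k+1)=k-1>2\delta_k+1$ slots of room, so inserting a single edge there lengthens the longest path only to $k-1$ and is perfectly compatible with $P_k$-freeness; extremality then \emph{forces} such an edge whenever it does not create a $K_m$, rather than forbidding it. This is why the paper's structure theorem (Lemma~\ref{lem_dominate}, Theorem~\ref{max}) concludes $G\subseteq H\vee(I_{n-\delta_k-2}\cup K_2)$ for odd $k$, not $H\vee I_{n-\delta_k}$, and why Lemma~\ref{lem4} must then do the extra bookkeeping showing that when $m\le\delta_k+2$ the $K_m$-freeness costs at least $|V_1|\ge1$ cross-edges at the $K_2$, exactly compensating the gained edge (and producing the second extremal graph $H_n^-(m,k)$).

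Two further remarks. First, even granting the even-$k$ case, your route to the dominating set $S$ via P\'osa rotation is only a heuristic as written: the paper instead disintegrates $G$ to its $\delta_k$-core and proves a full structure theorem for connected $\{P_k,K_m\}$-free graphs of minimum degree $\ge\delta_k$ using Saito's dominating-cycle lemma, the Erd\H{o}s--Gallai fan lemma, and a block analysis; this is needed because the core need not be $2$-connected, and the non-$2$-connected configurations $G_1,\dots,G_5$ (e.g.\ $K_1\vee tK_{\delta_k}$, double stars for $k=5$) are genuine competitors that your sketch never rules out --- indeed $S_{a,n-a}$ is a further extremal graph when $k=5$, $m=3$. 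Second, your claim that for $m\ge\delta_k+2$ the bound follows ``essentially for free'' from Theorem~\ref{thl.5} is precisely where the counterexample lives: for odd $k$ Kopylov's extremal graph $H_n(k,k)$ is itself $K_m$-free in that regime and exceeds your target by one edge.
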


\begin{theo}(Katona and Xiao \cite{katona2023})\label{th1.4}
For $k >2m-1$ and sufficiently large $n$,
$$\ex (n, \{P_k, K_m\} )=\delta_k  \cdot n+ t \left( \delta_k,m-2 \right)- \delta_k  ^2 ,$$
	Moreover, $T( \delta_k,m-2 )\vee I_{n- \delta_k}$ is one of the extremal graphs.
\end{theo}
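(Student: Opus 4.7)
The lower bound comes from the graph $H:=T(\delta_k,m-2)\vee I_{n-\delta_k}$. Any path in $H$ alternates between the two sides and so contains at most $2\delta_k+1\le k-1$ vertices, while any clique uses at most one independent vertex and thus has order at most $m-1$; a direct edge count gives $e(H)=\delta_k n+t(\delta_k,m-2)-\delta_k^2$.

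For the upper bound, let $G$ be an extremal $\{P_k,K_m\}$-free graph on $n$ vertices, with connected components $C_1,\dots,C_t$ of orders $n_1\ge\cdots\ge n_t$. I call $C_i$ \emph{large} if $n_i\ge k$ and \emph{small} otherwise. A large component (of order above the threshold of Theorem~\ref{th1.3}) contributes at most $\delta_k n_i+t(\delta_k,m-2)-\delta_k^2$ edges, while a small one automatically avoids $P_k$, so by Tur\'an's theorem contributes at most $t(n_i,m-1)$ edges.

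The crucial elementary inequality is $t(n_i,m-1)\le \delta_k n_i$ for every $1\le n_i\le k-1$, which reduces to $(n_i-1)(m-2)\le 2(m-1)\delta_k$ at the worst case $n_i=k-1$. Under the hypothesis $k>2m-1$, a direct check splitting on the parity of $k$ shows this holds with a fixed positive gap $\varepsilon=\varepsilon(k,m)$. Therefore, if $G$ has no large component then $e(G)\le(\delta_k-\varepsilon)n$, which falls strictly below $e(H)$ once $n$ is large. If $G$ has at least one large component, the negativity of $t(\delta_k,m-2)-\delta_k^2$ forces having more than one such component to be strictly suboptimal, so the maximum is attained with exactly one large component $C_1$, yielding
\[
e(G)\le\bigl(\delta_k n_1+t(\delta_k,m-2)-\delta_k^2\bigr)+\delta_k\sum_{i\ge 2}n_i=\delta_k n+t(\delta_k,m-2)-\delta_k^2,
\]
matching $e(H)$.

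The main technical hurdle is the transitional regime where the largest component has order only slightly above $k$ and Theorem~\ref{th1.3} is not yet in its ``sufficiently large'' range. I would handle this either by extending the connected bound down to these smaller orders through a direct structural analysis of $K_m$-free, $P_k$-free connected graphs near the boundary, or by treating such ``medium'' components as isolated outliers whose bounded edge deficit is absorbed into the linear slack $\varepsilon n$ provided by the small components once $n$ is taken large.
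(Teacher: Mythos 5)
Your overall strategy matches how this theorem is actually obtained in the paper (as the $r=2$ case of Theorem~\ref{nonconnect}): split $G$ into components, apply the connected bound to large components, apply Tur\'an/Zykov to components of order less than $k$, and use the numerical fact that $t(k-1,m-1)/(k-1)<\delta_k$ when $k>2m-1$ --- this last inequality is precisely the paper's hypothesis $N_{r-1}(T(\delta_k,m-2))>N_r(T(k-1,m-1))/(k-1)$ at $r=2$ (see Remark~1). Two small corrections: a path in $H$ need not alternate sides (the right argument is that it uses at most $\delta_k+1$ vertices of $I_{n-\delta_k}$, hence at most $2\delta_k+1\le k-1$ vertices in total); and your reduction of $t(n_i,m-1)\le\delta_k n_i$ to $(n_i-1)(m-2)\le 2(m-1)\delta_k$ rests on the false bound $t(n,r)\le\frac{r-1}{r}\binom{n}{2}$ (already false for $t(3,2)=2$). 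The correct bound $t(n,r)\le\frac{(r-1)n^2}{2r}$ replaces $n_i-1$ by $n_i$, and the resulting inequality $(k-1)(m-2)\le 2(m-1)\delta_k$ still holds strictly under $k>2m-1$, so the conclusion survives.

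The genuine gap is the ``medium'' regime you flag at the end, and neither of your proposed fixes closes it. A component of order $\ell_i$ with $k\le\ell_i<n_0$ (where $n_0$ is the threshold of Theorem~\ref{th1.3}) is not covered by Theorem~\ref{th1.3}, and Erd\H{o}s--Gallai alone gives only $e(C_i)\le\frac{k-2}{2}\ell_i$, which equals $\delta_k\ell_i$ for even $k$ and exceeds it for odd $k$; since $t(\delta_k,m-2)-\delta_k^2<0$, a graph consisting entirely of such components would then appear to beat $e(H)$, so these components cannot be waved through. Your option of absorbing them ``into the linear slack provided by the small components'' fails outright when $G$ has no small components and $\Theta(n)$ medium ones. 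Your other option --- a direct structural analysis of connected $\{P_k,K_m\}$-free graphs in this range --- is exactly the missing content, and it is the bulk of the paper's work: the $(\delta_k-1)$-disintegration reduces each component to a core of minimum degree at least $\delta_k$ at a cost of at most $\delta_k-1$ edges per deleted vertex, Theorem~\ref{max} then classifies such cores, and Lemmas~\ref{lem4} and~\ref{Lem41} together with the displayed computation before~(\ref{eq42}) show each core component of order $\ell_i\in[k,n_1]$ has strictly fewer than $\delta_k\ell_i$ edges. Without some substitute for this analysis, your upper bound does not go through.
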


Note that in Theorem \ref{th1.4}, only the case $m< k\le 2m-1$ is left unsolved. So they proposed a conjecture as below.

\begin{conj}(Katona and Xiao \cite{katona2023})\label{conj}
 Let $m+1 \leq k \leq 2m-1$. If $k$ is odd and $(k-1)|n$, then $\frac{n}{k-1} \cdot T(k-1, m-1)$ gives the maximum while $T( \delta_k,m-2 ) \vee I_{ n- \delta_k}$ is the best for even $k$, for large $n$.
\end{conj}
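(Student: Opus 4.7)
The plan is to prove Conjecture~\ref{conj} by a component-decomposition argument: apply Theorem~\ref{th1.3} inside each large connected component and Tur\'an's theorem inside each small one, then optimize over component-size profiles via a parity-sensitive density comparison.

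First I would verify both candidate graphs are $\{P_k, K_m\}$-free with the claimed edge counts. The graph $\tfrac{n}{k-1}\,T(k-1, m-1)$ has components of order $k-1 < k$, hence is $P_k$-free, and each component is $K_m$-free as a Tur\'an graph with $m - 1$ parts; its edge count is $\tfrac{n}{k-1}\,t(k-1, m-1)$. The graph $T(\delta_k, m-2) \vee I_{n-\delta_k}$ is $K_m$-free because any clique contains at most $m-2$ vertices of $T(\delta_k, m-2)$ and at most one vertex of $I_{n-\delta_k}$, and for even $k$ its longest path has at most $2\delta_k + 1 = k - 1$ vertices; its edge count is $\delta_k n + t(\delta_k, m-2) - \delta_k^2$.

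For the matching upper bound, let $G$ be an extremal $\{P_k, K_m\}$-free graph on $n$ vertices with components $G_1, \dots, G_t$ and $n_i := |V(G_i)|$. I would apply Tur\'an's theorem whenever $n_i \le k - 1$ (giving $e(G_i) \le t(n_i, m-1)$, tight iff $G_i \cong T(n_i, m-1)$) and Theorem~\ref{th1.3} whenever $n_i \ge k$, provided $n$ is large enough for the ``sufficiently large'' hypothesis to apply (giving $e(G_i) \le \delta_k n_i + t(\delta_k, m-2) - \delta_k^2$). Since the additive constant $t(\delta_k, m-2) - \delta_k^2$ is negative, a replacement that merges two large components into one strictly increases this upper bound, so in an extremal configuration at most one component has size $\ge k$. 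Summing the remaining bounds reduces the problem to a finite optimization controlled by the per-vertex density comparison between $t(k-1, m-1)/(k-1)$ and $\delta_k$. A direct calculation using the explicit Tur\'an formula for $t(k-1, m-1)$ in the range $m + 1 \le k \le 2m - 1$ (where the Tur\'an graph has parts of size $1$ or $2$) shows this density strictly exceeds $\delta_k$ when $k$ is odd and $k \le 2m - 3$, equals $\delta_k$ when $k = 2m - 1$, and falls strictly below $\delta_k$ when $k$ is even. Consequently, for even $k$ the optimum has one large component with structure $T(\delta_k, m-2) \vee I_{n-\delta_k}$, while for odd $k$ with $(k-1) \mid n$ the optimum is a uniform tiling into $T(k-1, m-1)$ components.

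The main obstacle is the boundary case $k = 2m - 1$, where the two densities coincide and the decision rests on the constant correction. Substituting $\delta_k = m - 2$ gives $t(\delta_k, m-2) - \delta_k^2 = \binom{m-2}{2} - (m-2)^2 = -\tfrac{(m-1)(m-2)}{2}$, so the disjoint tiling strictly wins by this margin. A secondary difficulty is uniqueness: ruling out mixed profiles (one large component coexisting with smaller pieces) will require a replacement argument using the uniqueness clauses of Theorem~\ref{th1.3} and Tur\'an's theorem to first pin down the exact structure of each component type, then showing any such mixture is strictly inferior to the pure construction dictated by the parity of $k$.
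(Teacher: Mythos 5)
Your route is genuinely different from the paper's: the paper deduces Conjecture~\ref{conj} from its Theorem~\ref{nonconnect} (take $r=2$ and use the density comparison of Remark~2), and Theorem~\ref{nonconnect} is proved by first running a $(\delta_k-1)$-disintegration to a $\delta_k$-core and then classifying the core's components via the structural Theorem~\ref{max}. You instead decompose into components at the outset and invoke Theorem~\ref{th1.3} as a black box. Your density computation ($t(k-1,m-1)/(k-1)-\delta_k=\tfrac12-\tfrac{k-m}{k-1}$ for odd $k$, $=-\tfrac{k-m}{k-1}$ for even $k$) and the constant $-\tfrac{(m-1)(m-2)}{2}$ at $k=2m-1$ are correct. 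However, there is a genuine gap: components $G_i$ with $k\le n_i\le n_0$, where $n_0$ is the (unspecified) threshold in Theorem~\ref{th1.3}, are covered by neither of your two tools. Theorem~\ref{th1.3} requires the \emph{component} to be sufficiently large --- making the ambient $n$ large does not enlarge individual components, and there may be up to $n/k$ such medium components. Tur\'an's theorem does apply to them, but $t(n_i,m-1)/n_i$ is \emph{increasing} in $n_i$, so for $n_i\ge k$ it exceeds $t(k-1,m-1)/(k-1)$ and the bound overshoots the target. As written, a graph consisting of many components of order, say, $2k$ is not controlled by your argument.

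The gap is patchable without the paper's machinery: Theorem~\ref{thl.5} (Kopylov; Balister--Gy\H{o}ri--Lehel--Schelp) bounds every connected $P_k$-free component with $n_i\ge k$ by $\max\bigl\{\binom{k-2}{2}+n_i-k+2,\ \delta_k n_i-\binom{\lceil k/2\rceil}{2}+\binom{\lceil k/2\rceil}{2}-\delta_k\lceil k/2\rceil+\binom{\lceil k/2\rceil}{2}\bigr\}$, which one checks is at most $\delta_k n_i$ for odd $k\ge5$ and at most $\delta_k n_i-\binom{k/2}{2}=\delta_k n_i+t(\delta_k,m-2)-\delta_k^2$ for even $k$ in the conjecture's range $\delta_k\le m-2$; this is exactly the linear bound your comparison needs for medium components. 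You should add this step explicitly. Two further caveats: (i) the paper's Remark~1 points out that Theorem~\ref{th1.3} is actually false for odd $k$ with $\delta_k<m-2$ (i.e.\ odd $k\le 2m-3$), where the correct connected extremum is one edge larger ($K_{\delta_k}\vee(I_{n-\delta_k-2}\cup K_2)$); this off-by-one is harmless for your asymptotic comparison but should be acknowledged; (ii) your uniqueness discussion is only a sketch --- the equality analysis for mixed profiles, especially at $k=2m-1$ where the two densities coincide, still needs to be carried out.
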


Inspired by these results, Liu and Kang \cite{Kang} considered a more general problem. For any graph $H$, they determined the exact value of $\ex_{con}(n,\{P_k,H\})$ and the value of $\ex(n,\{P_k,H\})$ up to a constant term.

For a graph $T$ and a family $\mathcal{H}$, the generalized Tur\'an number, denoted by $\ex(n,T,\mathcal{H})$, is the maximum number of copies of $T$ in an $n$-vertex $\mathcal{H}$-free graph. Obviously, when $T=K_2$, $\ex(n,T,\mathcal{H})=\ex(n,\mathcal{H})$. And $\ex_{con}(n,T,\mathcal{H})$ is defined similar to $\ex_{con}(n,\mathcal{H})$.   This concept was first defined by Alon and Shikhelman \cite{alon2016} and has been a very active problem in the last decade. For the path $P_k$, its generalized Tur\'an number is also studied widely. Let $N_r(G)$ denote the number of copies of $K_r$ in $G$. In 2017, Luo \cite{luo2017} obtained the generalized version of Theorem \ref{thl.5}.
\begin{theo}(Luo \cite{luo2017})\label{luo}
For $n\ge k\ge 4$,
\[\ex_{con}(n,K_r,P_k)=\max\{N_r(K_1\vee(K_{k-3}\cup I_{n-k+2})), N_r(H_n(k,k))\}.\]
Moreover, the extremal graph is either $K_1\vee(K_{k-3}\cup I_{n-k+2})$ or $H_n(k,k)$.
\end{theo}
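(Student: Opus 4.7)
The plan is to upgrade the Kopylov disintegration argument that underlies the edge version (Theorem \ref{thl.5}) from edge counts to counts of $K_r$, combined with induction on block structure. I would proceed by induction on $n$; the base case $n=k-1$ is immediate, since every connected graph on $k-1$ vertices is automatically $P_k$-free and $K_{k-1}$ is the unique maximizer with $\binom{k-1}{r}$ copies of $K_r$.

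For the inductive step, take an extremal $n$-vertex connected $P_k$-free graph $G$ with $n\ge k$. First handle the case where $G$ has a cut vertex $v$: writing $V(G)=V_1\cup V_2$ with $V_1\cap V_2=\{v\}$ and no edges across, I would use the identity $N_r(G)=N_r(G[V_1])+N_r(G[V_2])$ (valid for $r\ge 2$) together with the $P_k$-free constraint that the longest paths in $G[V_1]$ and $G[V_2]$ ending at $v$ must concatenate to length strictly less than $k$. Applying the induction hypothesis to each side and optimizing shows the extremum arises by concentrating all surplus vertices on one side in the form $K_1\vee(K_{k-3}\cup I_{n-k+2})$.

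When $G$ is $2$-connected, I would execute the disintegration: iteratively delete a vertex of degree at most $\delta_k=\lfloor k/2\rfloor-1$, destroying at most $\binom{\delta_k}{r-1}$ copies of $K_r$ per step. Let $H$ be the terminal subgraph. Either $|V(H)|\le k-1$, in which case $N_r(H)\le\binom{k-1}{r}$, or $\delta(H)\ge\lfloor k/2\rfloor$ with $H$ inheriting $2$-connectivity --- and then a Dirac / Erd\H{o}s--Gallai path argument forces $P_k\subseteq H\subseteq G$, contradicting $P_k$-freeness. This yields the telescoping bound $N_r(G)\le(n-|V(H)|)\binom{\delta_k}{r-1}+\binom{k-1}{r}$, which is saturated exactly by $H_n(k,k)$, the $\delta_k$ disintegrated vertices corresponding to the independent-set part of the join $K_{\delta_k}\vee\,\cdot\,$.

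The hard part will be the bookkeeping in the $2$-connected case: one must show the disintegration bound is tight precisely at $H_n(k,k)$ and recover the parity-dependent $K_2$ that appears for odd $k$. The key is that $N_r(K_a\vee R)$ is convex and monotone in the structure of $R$ over $P_{k-2a}$-free $R$, so its maximum is attained at $R=I_{n-a}$ for even $k$ and at $R=I_{n-a-2}\cup K_2$ for odd $k$ (this is where the extra $K_2$ of $H_n(k,k)$ appears, since for odd $k$ one free ``slot'' can be spent on an edge without creating a $P_k$). A direct comparison of $N_r(K_1\vee(K_{k-3}\cup I_{n-k+2}))$ --- constant in $n$ for $r\ge 3$, linear with slope $1$ for $r=2$ --- against $N_r(H_n(k,k))$ --- linear with slope $\binom{\delta_k}{r-1}$ up to small parity corrections --- then identifies which candidate dominates as a function of $n$, $k$, $r$, producing the stated maximum and the two extremal graphs.
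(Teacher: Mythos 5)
First, note that the paper does not prove this statement at all: Theorem \ref{luo} is quoted from Luo \cite{luo2017} as a known black box, so there is no internal proof to compare against. Judged on its own, your outline correctly identifies the Kopylov-style disintegration as the right engine, but two of its load-bearing steps fail as stated. The most concrete problem is quantitative: if you disintegrate by deleting vertices of degree at most $\delta_k$ and allow the terminal core $H$ to have up to $k-1$ vertices, the telescoping bound you write, $N_r(G)\le (n-|V(H)|)\binom{\delta_k}{r-1}+\binom{k-1}{r}$, is \emph{not} saturated by $H_n(k,k)$ --- it strictly exceeds $N_r(H_n(k,k))$. For $k=6$, $r=2$ it gives $2(n-5)+\binom{5}{2}=2n$, whereas $N_2(H_n(6,6))=1+2(n-2)=2n-3$. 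The actual argument must stop the disintegration once the core reaches $\lceil k/2\rceil$ vertices (then $\binom{\lceil k/2\rceil}{r}+(n-\lceil k/2\rceil)\binom{\delta_k}{r-1}$ equals $N_r(H_n(k,k))$ exactly, including the odd-$k$ $K_2$ correction), and the genuinely hard case is a core with more than $\lceil k/2\rceil$ vertices and minimum degree at least $\lfloor k/2\rfloor$. There your proposed contradiction does not go through, because the core does \emph{not} inherit $2$-connectivity from $G$; deleting low-degree vertices can create cut vertices or disconnect the graph, so Dirac/Erd\H{o}s--Gallai cannot be applied to $H$ directly. What is needed is Kopylov's lemma that a $2$-connected graph $G$ containing a subgraph of large minimum degree admits a long cycle or path through that subgraph --- this is the technical heart of the proof and is entirely absent from your sketch.

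Two smaller gaps: the cut-vertex induction cannot run on the plain statement, since after splitting at $v$ the two sides are constrained not by ``$P_{k_i}$-free'' but by the length of the longest path \emph{ending at} $v$, so you need a strengthened induction hypothesis tracking rooted paths (and then an optimization over block decompositions to see why all blocks but one collapse to $K_2$'s in $K_1\vee(K_{k-3}\cup I_{n-k+2})$). And the ``convexity of $N_r(K_a\vee R)$ over $P_{k-2a}$-free $R$'' step is not the right formulation: $K_a\vee R$ is $P_k$-free iff $R$ has no linear forest with at most $a+1$ components covering $k-a$ vertices, a condition on path covers rather than on a single longest path. For comparison, the present paper's own method for its main theorems (a $(\delta_k-1)$-disintegration down to a connected core combined with the stability result Theorem \ref{max}) sidesteps the early-stopping trick but leans on Theorem \ref{luo} itself to lower-bound the core's size, so it cannot be recycled to prove Theorem \ref{luo}.
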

As a corollary of Theorem \ref{luo}, one can easily obtained an Erd\H{o}s-Gallai type upper bound, i.e., $\ex(n,K_r,P_k)\le \frac{n}{k-1}\binom{k-1}{r}$. Recently, Chakraborti and Chen \cite{DQ} obtained the exact value of $\ex(n,K_r,P_k)$.
\vskip 2mm
 In this paper, we focus on the generalized Tur\'an numbers $\ex_{con} (n, K_r, \{P_k, K_m\})$ and $\ex (n, K_r, \{P_k, K_m\})$,
and try to extend Katona and Xiao's  results on classical Tur\'an number and solve their problem. Let $S_{a,n-a}$ be the double star obtained from stars $S_{a}$ and $S_{n-a}$ by joining their centers. The main results of this paper are as follows.
\begin{theo}\label{gener}
  	For $k>m$, $r\leq \min\{m-1, \delta_k+1 \}$ and sufficiently large $n$,
   $$\ex_{con} (n, K_r, \{P_k, K_m\}) = N_r(H_n(m,k)).$$
The extremal graphs are $H_n(m,k)$, $H_n^-(m,k)$ {\rm (}$m-2\le \delta_k\le 2m-5$ and $k$ is odd{\rm )} and $S_{a,n-a}$ {\rm (}$k=5, m=3$ and $r=2${\rm )}.
\end{theo}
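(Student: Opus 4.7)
The plan is to prove Theorem \ref{gener} in two parts: first exhibit the claimed extremal graphs to obtain the lower bound, then match it by adapting Luo's structural result (Theorem \ref{luo}) for connected $P_k$-free graphs to the joint forbidden-subgraph setting. For the lower bound, I would verify that $H_n(m,k)$ is connected (being a join over a nonempty set), $K_m$-free (as it is $(m-1)$-chromatic, with the core contributing $m-2$ color classes and the independent pendant one more), and $P_k$-free (any path must essentially alternate between the core of size $\delta_k$ and its independent neighborhood, yielding at most $2\delta_k+1\le k-1$ vertices, with the odd-$k$ variant's extra $K_2$ adding at most one more vertex). The count $N_r(H_n(m,k))$ then follows by splitting on how many of the $r$ clique vertices lie in the core. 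Analogous verifications apply to $H_n^-(m,k)$ in the odd-$k$ window $m-2\le\delta_k\le 2m-5$, and to the double star $S_{a,n-a}$ at the sporadic point $(k,m,r)=(5,3,2)$, where $|E(S_{a,n-a})|=n-1=|E(H_n(3,5))|$.

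For the upper bound, let $G$ be a connected $\{P_k,K_m\}$-free graph on $n$ vertices with $N_r(G)$ maximum. Theorem \ref{luo} gives $N_r(G)\le N_r(H_n(k,k))$ for $n$ large, but this is strictly larger than $N_r(H_n(m,k))$ whenever $m\le\delta_k+1$, so the $K_m$-freeness must be exploited. The plan is to adapt the Kopylov--Luo structural argument to the joint setting: take a longest path in $G$, extract a ``core'' $S\subseteq V(G)$ with $|S|\le\delta_k$ such that every outside vertex joins essentially all of $S$, and observe that $K_m$-freeness then forces $G[S]$ to be $K_{m-1}$-free. Zykov's generalized Tur\'an theorem bounds $N_{r-1}(G[S])\le N_{r-1}(T(\delta_k,m-2))$ (and likewise $N_r(G[S])\le N_r(T(\delta_k,m-2))$), and combining
\[
N_r(G)\;\le\;N_r(G[S]) + (n-|S|)\cdot N_{r-1}(G[S]) + O(1)
\]
with the extremal structure for $S$ yields $N_r(G)\le N_r(H_n(m,k))$ for large $n$, matching the lower bound. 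The middle range $\delta_k+2<m<k$ is then handled by the same count with $G[S]=K_{\delta_k}$ in place of a Tur\'an graph, and the odd-$k$ sub-case gains exactly the contribution of one extra pendant $K_2$, producing the third branch of $H_n(m,k)$.

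The hard part will be the structural reduction and the equality analysis. Translating the Kopylov--Luo machinery into a framework respecting the $K_m$-free constraint requires care, because shifting and closure-type operations on vertices outside the longest path risk introducing a $K_m$; the argument must show that either such obstructions do not arise in a near-extremal $G$, or the corresponding configurations are already dominated by $N_r(H_n(m,k))$. The uniqueness part is a separate bookkeeping challenge: one must compute $N_r(H_n(m,k))-N_r(H_n^-(m,k))$ and verify that it vanishes exactly in the range $m-2\le\delta_k\le 2m-5$ with $k$ odd, and identify the $(k,m,r)=(5,3,2)$ coincidence that brings in the double stars $S_{a,n-a}$. Certifying that no further graph attains the maximum, in particular ruling out near-Tur\'an cores with non-Tur\'an distributions and handling the small-$k$ boundary cases where the estimate $O(1)$ above is no longer negligible, will be the technical heart of the proof.
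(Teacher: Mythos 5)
Your lower-bound verification and your final counting step (bounding $N_r(G)$ by $N_r(G[S])+(n-|S|)\cdot N_{r-1}(G[S])$ plus lower-order terms and then invoking Zykov's theorem on the $K_{m-1}$-free core) do match what the paper does in its Lemma \ref{lem4}. But the step you defer as ``the hard part'' --- producing a core $S$ of size at most $\delta_k$ to which essentially all outside vertices attach --- is the entire content of the paper's argument, and the route you propose for it (adapting the Kopylov--Luo closure/shifting machinery along a longest path) is precisely the one the paper avoids, for the reason you yourself flag: closure-type operations can create a $K_m$. You do not resolve that obstruction, so as it stands the proposal is a plan whose central step is missing.

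The paper's actual mechanism differs in two respects you would need to supply. First, instead of working directly on a longest path, it runs a $(\delta_k-1)$-disintegration (iteratively deleting vertices of degree less than $\delta_k$ while keeping the graph connected) and exploits the strict inequality $\ex(\delta_k-1,K_{r-1},K_{m-1})<N_{r-1}(T(\delta_k,m-2))$: each deleted vertex destroys fewer copies of $K_r$ than $H_n(m,k)$ gains per vertex, and Luo's bound $N_r(G^t)\le\frac{t}{k-1}\binom{k-1}{r}$ then forces the surviving core $G^t$ to have linear order. Second, it proves a full classification (Theorem \ref{max}) of connected $\{P_k,K_m\}$-free graphs of minimum degree at least $\delta_k$, using Saito's theorem on strong dominating cycles (via $\sigma_3(G)\ge 3\delta_k$), a block decomposition in the non-$2$-connected case, and Lemma \ref{lem3} to pin the neighborhoods of off-path vertices down to a common set $\{v_2,v_4,\dots\}$. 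That classification produces exceptional families your sketch does not anticipate --- $G_1,G_2,G_3$ when $m\ge\delta_k+2$, $G_4,G_5$ at $k=7$, and $I_2\vee\frac{n-2}{2}K_2$, $K_2\vee\frac{n-2}{2}K_2$ at $k=9$ --- each of which must be shown to carry fewer copies of $K_r$ than $H_n(m,k)$ (Lemma \ref{Lem41}); the double star and the $H_n^-(m,k)$ extremal cases emerge exactly from that comparison. Without the disintegration step, the degree-forcing argument, and the classification (or a worked-out substitute for the closure argument that provably never creates a $K_m$), the upper bound is not established.
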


\begin{theo}\label{nonconnect}
Let $k>m$ and $r\leq \min\{m-1, \delta_k+1 \}$.  Then
\vskip 2mm	
{\rm (1)} $\ex (n, K_r, \{P_k, K_m\}) = N_r(H_n(m,k))$  if $N_{r-1} ( T(\delta_k, m-2)) >\frac{ N_r(T(k-1,m-1))}{k-1}$ and $n$ is large,  and the extremal graphs are $H_n(m,k)$, $H_n^-(m,k)$ {\rm (}$m-2\le \delta_k\le 2m-5$ and $k$ is odd{\rm )} and $S_{a,n-a}$ {\rm (}$k=5, m=3$ and $r=2${\rm )}; % 极图对不对，检查一下
\vskip 2mm	
{\rm (2)} $\ex (n, K_r, \{P_k, K_m\}) \leq \frac{N_r(T(k-1, m-1))}{k-1} n$ if $N_{r-1} ( T(\delta_k, m-2)) \leq \frac{ N_r(T(k-1,m-1))}{k-1}$.  When $(k-1) | n$, the equality holds and $\frac{n}{k-1} \cdot T(k-1, m-1)$ is an extremal graph.
\end{theo}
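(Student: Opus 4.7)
The plan is to reduce to the connected case (Theorem~\ref{gener}) by analysing components. Let $G$ be an $n$-vertex $\{P_k,K_m\}$-free graph with connected components $C_1,\ldots,C_t$ in non-increasing order of size. Our strategy is to bound $N_r(C_i)$ in one of two regimes: \emph{small components} with $|C_i|\le k-1$, and \emph{large components} with $|C_i|\ge k$, to which Theorem~\ref{gener} applies once $|C_i|$ is sufficiently large.

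For small components, Zykov's theorem gives $N_r(C_i)\le N_r(T(|C_i|,m-1))$, and a direct computation with the explicit product formula $N_r(T(j,m-1))=\sum \prod_{\ell=1}^{r}|V_{i_\ell}|$ (the sum over $r$-subsets of the $m-1$ parts) yields the density inequality
\[
\frac{N_r(T(j,m-1))}{j}\le \frac{N_r(T(k-1,m-1))}{k-1}, \qquad 1\le j\le k-1,
\]
hence $N_r(C_i)\le \frac{|C_i|}{k-1}N_r(T(k-1,m-1))$. For large components, Theorem~\ref{gener} gives $N_r(C_i)\le N_r(H_{|C_i|}(m,k))$, and counting $K_r$'s in the join defining $H_n(m,k)$ shows $N_r(H_n(m,k))=a\,n+b+O(1)$, where $a:=N_{r-1}(T(\delta_k,m-2))$ and $b$ is a non-positive constant (a claim we verify from the balanced-multipartite structure). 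Setting $c:=N_r(T(k-1,m-1))/(k-1)$ and summing over all components gives
\[
N_r(G)\le a\sum_{|C_i|\ge k}|C_i|+c\sum_{|C_i|\le k-1}|C_i|+O(1).
\]
In case~(1), strict $a>c$ forces the bound to be maximised when a single large component covers all of $V(G)$, so $G$ is connected and Theorem~\ref{gener} identifies the extremal graphs. In case~(2), $a\le c$ makes small components at least as efficient, yielding the upper bound $N_r(G)\le nc=\frac{n}{k-1}N_r(T(k-1,m-1))$; this is attained when $(k-1)\mid n$ by $\frac{n}{k-1}\cdot T(k-1,m-1)$.

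The principal technical obstacle is bounding $N_r(C_i)$ for intermediate-sized components $k\le |C_i|<n_0$, where $n_0$ is the ``large-$n$'' threshold coming from Theorem~\ref{gener}. For such $C_i$ we must supply a bound of the form $N_r(C_i)\le c\,|C_i|+O(1)$, either by extending the proof of Theorem~\ref{gener} down to all $|C_i|\ge k$, or by deriving a crude but sufficient estimate independently, for instance by combining the Luo-type connected bound (Theorem~\ref{luo}) with Zykov's inequality. Secondary issues include verifying the density monotonicity and the non-positivity of $b$, both of which reduce to routine manipulations with the multinomial clique count in $T(j,m-1)$; once these are in place, the extremal characterisation in case~(1) follows because any non-trivial extra component strictly decreases the count.
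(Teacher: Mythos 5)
Your overall strategy (bound each component by one of two linear ``densities'' $a=N_{r-1}(T(\delta_k,m-2))$ and $c=N_r(T(k-1,m-1))/(k-1)$, then compare $a$ with $c$) is the same comparison that drives the paper's proof, and your treatment of the two easy regimes is sound: components of order $<k$ via Lemma~\ref{ThKr} plus the density monotonicity of $N_r(T(j,m-1))/j$, and components large enough for Theorem~\ref{gener} to apply. But the intermediate regime $k\le |C_i|<n_0$, which you flag as the ``principal technical obstacle,'' is a genuine gap, and neither of your proposed patches closes it. The Luo-type bound $N_r(C_i)\le \frac{|C_i|}{k-1}\binom{k-1}{r}$ ignores $K_m$-freeness, and since $k>m$ we have $\binom{k-1}{r}>N_r(T(k-1,m-1))$, so it exceeds $c|C_i|$ and cannot be used in case~(2); Zykov's bound $N_r(T(|C_i|,m-1))$ grows like $|C_i|^r$ and is far from linear. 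Since there can be $\Theta(n)$ components in this range, and case~(2) requires $N_r(C_i)\le c|C_i|$ with no positive additive error on each one, a per-component $O(1)$ slack is not affordable. ``Extending Theorem~\ref{gener} down to all orders $\ge k$'' is exactly the nontrivial content that is missing: its proof (via Lemma~\ref{lem4}) genuinely uses a large-order threshold.

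The paper resolves this by reversing the order of your two reductions: it first applies the $(\delta_k-1)$-disintegration to the whole graph $G$, and only then decomposes the resulting $\delta_k$-core into components. Every core component then has minimum degree at least $\delta_k$, so the structure theorem (Theorem~\ref{max}) applies to it at \emph{every} order $\ge k$, not just large ones. For a core component $G_i$ with $k\le \ell_i\le n_1$ this yields $G_i\subseteq T(\delta_k,m-2)\vee(I_{\ell_i-\delta_k-2}\cup K_2)$ (up to the sporadic graphs handled by Lemma~\ref{Lem41}), and the counting identity
\[
\delta_k\cdot N_{r-1}\bigl(T(\delta_k,m-2)\bigr)>N_r\bigl(T(\delta_k,m-2)\bigr)+N_{r-2}\bigl(T(\delta_k,m-2)\bigr)
\]
then gives the strict linear bound $N_r(G_i)<a\,\ell_i$, with a uniform slack $\varepsilon$ over the finitely many orders in this range. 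The vertices removed during disintegration each destroy at most $N_{r-1}(T(\delta_k-1,m-2))\le a$ copies of $K_r$, which keeps the final bounds clean ($\le cn$ in case~(2), and forces connectivity plus $n=t$ in the equality analysis of case~(1)). If you want to keep your components-first architecture, you would have to run this disintegration-plus-structure argument inside each intermediate component anyway, so the cleanest repair is to adopt the global disintegration as the first step.
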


\begin{remark}
	When $r=2$, Theorem \ref{gener} implies Theorem \ref{th1.3} (Note that Theorem \ref{th1.3} is not true when  $k$ is odd and $\delta_k<m-2$, because  the graph  $K_{\delta_k}\vee (I_{n- \delta_k-2}\cup K_2)$ has more edges than $T( \delta_k,m-2 )\vee I_{n- \delta_k}$, regard $T(\delta_k,m-2)$  as $K_{\delta_k}$). 
When $2m-1 < k$, we have $\frac{t(k-1,m-1)}{k-1} < \frac{1}{k-1} \binom{m-1}{2} (  \frac{k-1}{m-1} ) ^2 < \frac{k-3}{2}$. Thus $N_{1} ( T(\delta_k, m-2)) >\frac{ N_2(T(k-1,m-1))}{k-1}$, and then Theorem \ref{nonconnect}(1) implies Theorem \ref{th1.4}. Moreover,  all extremal graphs of Theorems \ref{th1.3} and \ref{th1.4} are determined in Theorems \ref{gener} and \ref{nonconnect},  which strengthens the two results.
\end{remark}

\begin{remark}
	When $k$ is odd and $m+1 \leq k \leq 2m-1$, we have $1<\frac{k-1}{m-1} \leq 2$, and then $t(k-1, m-1) \geq \binom{k-1}{2} - \frac{k-1}{2} =\delta_k \cdot (k-1) $, which implies $N_{1} ( T(\delta_k, m-2))  \leq \frac{ N_2(T(k-1,m-1))}{k-1}$. When $k$ is even and $k>m$, we have $\delta_k \cdot (k-1) = \frac{(k-2) (k-1)}{2} =e(K_{k-1}) >t(k-1,m-1) $, which implies $N_{1} ( T(\delta_k, m-2)) >\frac{ N_2(T(k-1,m-1))}{k-1}$. Thus Theorem \ref{nonconnect} implies that Conjecture \ref{conj} is true by taking $r=2$.
\end{remark}

 The remainder of this paper is organized as follows. In Section \ref{sec2}, we give some preliminaries. To prove Theorems \ref{gener} and  \ref{nonconnect}, we  obtain a version of stability result for $\{P_k, K_m\}$, that is, Theorem \ref{max}, which characterizes the structure of a connected $\{P_k, K_m\}$-free graph with large minimum degree in Section \ref{sec3}.  In Section \ref{sec4}, we give the proofs of Theorems \ref{gener} and  \ref{nonconnect} using Theorem \ref{max}.

\section{Preliminaries}\label{sec2}

\hspace{1.5em}A family of $k$ internally disjoint $(v,Y)$-paths whose terminal vertices are distinct in $Y$ is referred to as a $k$-fan from $v$ to $Y$.

\begin{lem}(Bondy and Murty \cite{bondy})\label{lem}
Let $G$ be a $k$-connected graph, let $v$ be a vertex of $G$ and $Y \subseteq V(G) \setminus \{v\}$ be a set of at least $k$ vertices of $G$. Then there exists a $k$-fan in $G$ from $v$ to $Y$.
\end{lem}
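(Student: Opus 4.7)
The plan is to derive the lemma from Menger's theorem via a standard auxiliary-vertex reduction. I would form $G'$ from $G$ by adjoining a new vertex $w \notin V(G)$ joined to every vertex of $Y$. The goal is to produce $k$ internally disjoint $(v,w)$-paths in $G'$; since $N_{G'}(w) = Y$, each such path $Q_i$ must end with an edge $y_i w$ for some $y_i \in Y$, and internal disjointness of the $Q_i$'s forces the $y_i$ to be pairwise distinct. Deleting the terminal edge and the vertex $w$ from each $Q_i$ then yields the desired $k$-fan from $v$ to $Y$ in $G$.

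By Menger's theorem, the existence of $k$ internally disjoint $(v,w)$-paths in $G'$ is equivalent to every $(v,w)$-vertex-separator in $G'$ having size at least $k$. So I would verify the latter. Suppose for contradiction that $S \subseteq V(G') \setminus \{v,w\}$ separates $v$ from $w$ in $G'$ with $|S| < k$. If $Y \subseteq S$, then $|S| \geq |Y| \geq k$, an immediate contradiction. Otherwise pick $y' \in Y \setminus S$; since $wy' \in E(G'-S)$ and $v$ is separated from $w$ by $S$ in $G'$, the set $S$ must also separate $v$ from $y'$ in $G$. As $v, y' \notin S$ and $|S| < k$, this contradicts the $k$-connectivity of $G$.

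The only potentially delicate point is the bookkeeping when $v$ is adjacent to some vertex of $Y$ or when $|Y|$ strictly exceeds $k$, but both are handled uniformly by the construction: a length-two path $v\,y\,w$ is a legal Menger path in $G'$, and superfluous members of $Y$ simply are not used by the $k$ chosen paths. Hence no induction or structural case analysis on $G$ itself is required; the whole argument reduces to the separator verification above, which is the main (and only genuine) step.
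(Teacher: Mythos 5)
Your argument is correct and is essentially the standard proof of the Fan Lemma given in Bondy and Murty, which the paper simply cites without reproducing: adjoin an auxiliary vertex $w$ joined to all of $Y$, check via the separator condition that Menger's theorem yields $k$ internally disjoint $(v,w)$-paths, and delete $w$. The only cosmetic difference is that you verify the cut condition in $G'$ directly rather than asserting that $G'$ is $k$-connected, and you might add one line truncating each resulting path at its first vertex of $Y$ so that the paths are genuine $(v,Y)$-paths; neither point affects correctness.
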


\begin{lem}(Erd\H os and Gallai \cite{Erdos1959})\label{lem2}
	Let $G$ be a $2$-connected graph and $x, y$ be two given vertices. If every vertex other than $x, y$ has degree at least $k$ in $G$, then there is an $(x, y)$-path of length at least $k$.
\end{lem}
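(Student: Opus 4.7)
The plan is to argue by contradiction via a rotation--extension argument. Let $P = v_0 v_1 \cdots v_l$ be a longest $(x,y)$-path with $v_0 = x$ and $v_l = y$, and suppose for contradiction that $l \le k-1$. Every internal vertex $v_i$ ($1 \le i \le l-1$) has at most $l$ potential neighbors inside $V(P)$, while its degree in $G$ is at least $k \ge l+1$. Hence each such $v_i$ must have a neighbor in $R := V(G) \setminus V(P)$; in particular $R \ne \emptyset$, which also knocks out the Hamiltonian sub-case where $V(P) = V(G)$.

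Next I would pick any $u \in R$ and apply Lemma \ref{lem} (the fan lemma) with $Y = V(P)$ and the connectivity parameter $2$: since $G$ is $2$-connected and $|V(P)| = l + 1 \ge 2$, there exist two internally disjoint paths $Q_1, Q_2$ from $u$ to distinct vertices $v_i, v_j \in V(P)$ with $i < j$. Splicing these into $P$ yields an $(x,y)$-path
\[
P' \;=\; v_0 v_1 \cdots v_i\; Q_1\; u\; Q_2\; v_j v_{j+1} \cdots v_l,
\]
of length $l - (j-i) + |Q_1| + |Q_2|$. When $j - i = 1$ this already exceeds $l$, contradicting the choice of $P$.

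The main obstacle is then the case $j - i \ge 2$, where every $2$-fan from $R$ to $V(P)$ has far-apart endpoints on $P$. To handle this I would perform a P\'osa-type rotation: choose a longest $(x,y)$-path together with a $2$-fan from some $u\in R$ for which $j - i$ is as small as possible. Since each internal vertex $v_{i+1}, \ldots, v_{j-1}$ has at least $k - l$ neighbors in $R$, I would use such an off-path neighbor to enlarge one of $Q_1, Q_2$ by a detour through $R$, forcing $|Q_1| + |Q_2| > j - i$ and producing a longer $(x,y)$-path; alternatively, the enlarged fan would itself give a new $2$-fan with smaller $j - i$, contradicting minimality. The delicate point, which I regard as the technical heart of the argument, is to keep the modified paths internally disjoint and ensure the detours stay clear of $V(P) \setminus \{v_i, v_j\}$, which is where $2$-connectivity (via repeated applications of Lemma \ref{lem}) is used to reroute around forbidden vertices.
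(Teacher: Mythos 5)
This statement is quoted in the paper as a classical theorem of Erd\H{o}s and Gallai \cite{Erdos1959}; the paper supplies no proof of its own, so your attempt can only be judged on its internal correctness. Your opening steps are fine: if a longest $(x,y)$-path $P$ has length $l\le k-1$, then each internal vertex has degree at least $k\ge l+1$ but at most $l$ neighbours on $P$, so $R=V(G)\setminus V(P)\ne\varnothing$; the fan lemma then yields a $2$-fan from some $u\in R$ to $v_i,v_j$, and splicing it in gives a contradiction whenever $j-i=1$ (more generally whenever $j-i<|Q_1|+|Q_2|$).

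The case $j-i\ge 2$, however, is not proved but only gestured at, and the mechanism you describe does not close it. You observe that each of $v_{i+1},\dots,v_{j-1}$ has a neighbour in $R$, and you propose to ``enlarge one of $Q_1,Q_2$ by a detour through $R$, forcing $|Q_1|+|Q_2|>j-i$,'' or else to obtain a $2$-fan with smaller $j-i$. But a neighbour $w\in R$ of $v_{i+1}$ need have no connection to $Q_1\cup Q_2$ except through $V(P)$ itself (it may even lie in a different component of $G-V(P)$ from $u$), so there is no way to graft it onto the fan while preserving internal disjointness; nor does its existence produce a fan with closer endpoints, since the fan lemma gives you no control over where the terminal vertices land. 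You explicitly flag this as ``the technical heart of the argument'' and leave it unresolved, so the proof is incomplete exactly where the difficulty lies. The classical proofs of this lemma do not run along these lines: they proceed by induction on $|V(G)|$ (deleting or contracting to reduce to a smaller $2$-connected graph, with a separate analysis when the reduction destroys $2$-connectivity), or by a careful global count of the attachment pattern of the components of $G-V(P)$ along a longest path, rather than by locally repairing a single $2$-fan. As written, your argument establishes only the (easy) case in which some $2$-fan lands on two vertices at distance less than the total length of the fan, and the lemma does not follow.
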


For a vertex $v \in V(G)$ and a subgraph $H \subseteq G$, let $G - H$ denote the subgraph induced by $V (G)\setminus V (H)$, and $N_H(v)$ denote the set of vertices in $H$ which are adjacent to $v$ and  $d_H(v) = |N_H(v)|$. We simply write $N(v)=N_G(v)$ and use $\delta(G)$ to denote the minimum degree of $G$.
Let $p(G)$ be the order of a longest path in $G$ and $P$ is hamiltonian if $|P|=|G|$. We call $P$ a strong dominating path if $N(v)\subseteq V(P)$ for any $v\in V(G-P)$ and a strong dominating cycle is defined similarly. 

Let $\sigma_3(G) = \max \{d(u) + d(v) + d(w) \mid  \{u, v, w\} \text{ is an independent set in } G\}.$

\begin{lem}(Saito \cite{saito1999}) \label{lem5}
	Suppose $G$ is a $2$-connected graph of order $n$. Then either $G$ contains a strong dominating cycle or $p(G) \geq
	\min \{n,\sigma_3(G) - 1\}$.
\end{lem}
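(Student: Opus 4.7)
The plan is to argue by a Pósa-style rotation/extension on a longest path, pitting the minimum-degree/$\sigma_3$ condition against longest-path maximality, and identifying the strong dominating cycle as the structure that survives when no independent triple of endpoints can be produced.

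I would begin by taking a longest path $P=v_1v_2\cdots v_p$ in $G$ and assuming for contradiction that $p<n$ and $p<\sigma_3(G)-1$, so that no strong dominating cycle exists. By maximality of $P$, we immediately have $N(v_1)\cup N(v_p)\subseteq V(P)$. The first useful observation is that $V(P)$ cannot carry a cycle of length $p$: if it did, 2-connectivity would supply a vertex $w\in V(G)\setminus V(P)$ adjacent to $V(P)$, and splicing $w$ into the cycle at its attachment point would yield a path on $p+1$ vertices, contradicting the choice of $P$.

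Next, I would perform Pósa rotation at $v_p$: for every index $i$ with $v_pv_i\in E(G)$ and $i\le p-2$, the path $v_1\cdots v_i v_p v_{p-1}\cdots v_{i+1}$ is again a longest path, so its new endpoint $v_{i+1}$ also has all neighbors in $V(P)$. Let $S$ be the resulting set of rotation-endpoints starting from the $v_1$-end, and let $T$ be the analogous set obtained by rotating from $v_p$. The first observation above forbids any edge between $S$ and $T$ of the ``closing'' type (it would create a cycle of length $p$), so $S$ and $T$ both induce degree-restricted endpoint sets. I would then argue that either $G[V(G)\setminus V(P)]$ is an independent set with all its edges going into $V(P)$—so that after checking $P$ does close into a cycle one obtains a \emph{strong dominating} cycle and we are done—or else we can pick three mutually nonadjacent endpoints $a,b,c$ (possibly using one outside vertex $w\in V(G)\setminus V(P)$ together with two rotation-endpoints). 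Since the neighborhoods of these vertices are all forced into $V(P)$, and two distinct endpoints of longest paths cannot have ``crossing'' neighbors on $P$ (again by the no-cycle-of-length-$p$ constraint), a counting on $V(P)$ gives $d(a)+d(b)+d(c)\le p+1$, whence $\sigma_3(G)\le p+1$, contradicting $p<\sigma_3(G)-1$.

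The hardest step will be the trichotomy at the end: producing the independent triple with the right degree bound, while making sure that the \emph{only} escape route from the contradiction is a cycle $C$ on $V(P)$ such that every vertex outside has all its neighbors inside $C$ (i.e., a strong dominating cycle). This requires a careful bookkeeping of the Pósa rotations on both ends, and a case analysis on whether an outside vertex $w$ is adjacent to a rotation-endpoint or only to interior vertices of $P$; in the latter case one uses 2-connectivity to splice $w$ into a rotated longest path, showing this cannot happen unless $V(G)\setminus V(P)$ is independent and attaches only to $V(P)$, which is precisely the strong dominating cycle conclusion after closing $P$.
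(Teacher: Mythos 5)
The paper does not prove this lemma; it is quoted verbatim from Saito's 1999 paper, so there is no in-house argument to compare against. Judged on its own, your rotation--extension sketch has two genuine gaps, both exposed by the same example: $G=K_3\vee I_{n-3}$ with $n\ge 8$. Here $p(G)=7$, every independent triple consists of three degree-$3$ vertices, so $\sigma_3(G)-1=8>p(G)$, and the lemma is saved only by the strong dominating $6$-cycle $i_1k_1i_2k_2i_3k_3i_1$. First gap: your only mechanism for producing the strong dominating cycle is ``closing $P$ into a cycle,'' i.e.\ a spanning cycle of $V(P)$. But your own first observation shows that under the standing assumptions ($p<n$, connectivity) no spanning cycle of $V(P)$ can exist, so that branch is vacuous; and indeed in the example $G[V(P)]$ has no Hamiltonian cycle (its $7$ vertices include $4$ pairwise nonadjacent ones). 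The cycle the lemma promises is in general \emph{shorter} than the longest path and is not obtained by closing $P$; your sketch contains no construction for it.

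Second gap: the counting step $d(a)+d(b)+d(c)\le p+1$ for an independent triple of (rotation) endpoints is false. In the same example take $a=v_1$, $b=v_7$ (the two ends of the longest path, both independent-side vertices) and $c=w$ any outside vertex; all three have all neighbours in $V(P)$, the triple is independent, yet $d(a)+d(b)+d(c)=9=p+2$. So the intended contradiction with $p<\sigma_3(G)-1$ does not follow. This is not a bookkeeping issue: the entire content of Saito's theorem is that the degree-sum estimate can fail, and that whenever it fails one can extract a (shorter) cycle whose complement is independent. Your plan defers exactly this step to ``careful bookkeeping'' without supplying the idea that makes it work, so the proposal does not constitute a proof.
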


\begin{lem}(Katona and Xiao \cite{katona2023}) \label{lem3}
	Let $G$ be a connected graph on $k$ vertices with no Hamiltonian path, but with a path
	$P =v_1 v_2\cdots v_{k-1}$ on $k-1$ vertices. Suppose the vertex $u \in  V(G-P)$ has degree $s$, that is
	$N_P (u) =\{v_{i_1}, v_{i_2},\dots, v_{i_s} \}$, $i_1 < i_2 < \cdots < i_s$. Then
	
	{\rm (i)} $s \leq \delta_k$;
	
	{\rm (ii)} there are no edges of the form $v_{i_j+1} v_{i_r+1}$, $v_{i_j-1} v_{i_r-1}$, $v_1 v_{i_r+1}$ or $v_{i_r-1} v_{k-1}$ $(1 \leq  j < r \leq s)$.
\end{lem}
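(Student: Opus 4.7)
The plan is to prove both parts by the same idea: if any of the listed configurations occurs, a rearrangement of $P$ together with $u$ produces a Hamiltonian path of $G$, contradicting the hypothesis.

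For (i), I would first rule out $v_1, v_{k-1} \in N_P(u)$, since $u v_1 v_2 \cdots v_{k-1}$ or $v_1 v_2 \cdots v_{k-1} u$ would already be a Hamiltonian path. Hence $N_P(u) \subseteq \{v_2, \ldots, v_{k-2}\}$. Next I would show that $N_P(u)$ cannot contain two consecutive vertices $v_i, v_{i+1}$: otherwise $v_1 \cdots v_i \, u \, v_{i+1} \cdots v_{k-1}$ is Hamiltonian. Thus $N_P(u)$ is an independent set inside a path on $k-3$ vertices, so $s \le \lceil (k-3)/2 \rceil$, and a small parity check shows this equals $\delta_k = \lfloor k/2 \rfloor - 1$ for both parities of $k$.

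For (ii), each of the four forbidden edges is eliminated by exhibiting a specific Hamiltonian rearrangement. If $v_{i_j+1} v_{i_r+1}$ were an edge with $j<r$, then
$$v_1 \cdots v_{i_j} \, u \, v_{i_r} v_{i_r-1} \cdots v_{i_j+1} \, v_{i_r+1} \cdots v_{k-1}$$
is Hamiltonian via $u v_{i_j}$, $u v_{i_r}$ and the forbidden edge. The edge $v_{i_j-1} v_{i_r-1}$ is excluded by the symmetric rearrangement $v_1 \cdots v_{i_j-1} \, v_{i_r-1} \cdots v_{i_j} \, u \, v_{i_r} \cdots v_{k-1}$. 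For $v_1 v_{i_r+1}$, the sequence $u \, v_{i_r} v_{i_r-1} \cdots v_1 \, v_{i_r+1} \cdots v_{k-1}$ works, and for $v_{i_r-1} v_{k-1}$ the sequence $v_1 \cdots v_{i_r-1} \, v_{k-1} v_{k-2} \cdots v_{i_r} \, u$ works.

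The only real obstacle is bookkeeping: one must confirm that each displayed sequence truly visits every vertex exactly once and that the indices invoked (for instance $i_r+1 \le k-1$ and $i_j-1 \ge 1$) are legal, which follows directly from the inclusion $N_P(u) \subseteq \{v_2, \ldots, v_{k-2}\}$ established in (i). No deeper machinery — no rotation-extension iteration, no connectivity hypothesis beyond what is given — is needed.
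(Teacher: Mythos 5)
Your proof is correct and complete. Note that the paper itself gives no proof of this lemma --- it is quoted verbatim from Katona and Xiao \cite{katona2023} --- so there is nothing internal to compare against; your argument is the standard rotation/reinsertion one (the same style of argument Katona and Xiao use), and all four displayed sequences do visit each of the $k$ vertices exactly once, with the index legality $2 \le i_1$ and $i_s \le k-2$ and the non-adjacency of consecutive neighbours on $P$ following from the observations you make in part (i). The only tiny point worth making explicit is that since $|V(G)|=k$ and $|V(P)|=k-1$, the vertex $u$ is the unique vertex off $P$, so $N(u)=N_P(u)$ and every rearrangement you build really is a Hamiltonian path of $G$ rather than merely a longer path.
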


\begin{lem}(Zykov \cite{zykov1949}) \label{ThKr}
	For $2\leq r<m \leq n$, $\ex(n,K_r,K_m)=N_r(T(n,m-1))$, and  $T(n,m-1)$ is the unique extremal graph.
\end{lem}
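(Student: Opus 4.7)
The plan is to use Zykov symmetrization to reduce to complete multipartite graphs and then balance the parts. Let $G$ be an $n$-vertex $K_m$-free graph maximizing $N_r(G)$, and for each $v\in V(G)$ set $f(v):=N_{r-1}(G[N(v)])$; this counts the $K_r$'s of $G$ through $v$, so that $r\cdot N_r(G)=\sum_{v\in V(G)} f(v)$.

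First I would prove the symmetrization step: for non-adjacent vertices $u,v$ with $f(u)\geq f(v)$, the graph $G'$ obtained by deleting every edge incident to $v$ and joining $v$ to every vertex of $N_G(u)$ satisfies $N_r(G')\geq N_r(G)$ and remains $K_m$-free. For the count, only $K_r$'s through $v$ are affected, and these now number $f(u)$, yielding $N_r(G')=N_r(G)-f(v)+f(u)\geq N_r(G)$. For $K_m$-freeness, $u$ and $v$ remain non-adjacent, so any $K_m$ in $G'$ using $v$ would, after swapping $v$ for $u$, give a $K_m$ in $G$ --- impossible.

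Next I would choose $G^*$ among the extremal graphs having the maximum number of ordered pairs $(u,v)$ with $N(u)=N(v)$, and argue that $G^*$ is complete multipartite. If some non-adjacent pair $u,v$ in $G^*$ had $N(u)\neq N(v)$, then applying the symmetrization above (sending the vertex of smaller $f$-value onto the other) would produce another extremal graph in which $(u,v)$ becomes a twin pair; one verifies directly that if $N_{G^*}(x)=N_{G^*}(y)$ then $N_{G'}(x)=N_{G'}(y)$, so no existing twin pair is destroyed, contradicting the choice of $G^*$. Hence non-adjacency is transitive in $G^*$, so $G^*$ is complete multipartite, and being $K_m$-free it has at most $m-1$ parts.

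Finally I would show that among $n$-vertex complete multipartite graphs with $k\leq m-1$ parts of sizes $n_1,\dots,n_k$, the Tur\'an graph $T(n,m-1)$ uniquely maximizes $N_r=\sum_{S\in\binom{[k]}{r}}\prod_{i\in S}n_i$: moving a vertex from a part of size $n_i$ to a part of size $n_j\leq n_i-2$ strictly increases $N_r$, forcing balance; and if $k<m-1$, splitting a part of size $\geq 2$ into two nonempty parts strictly increases $N_r$ (the gain being $|V_k'|\cdot|V_{k+1}'|\sum_{|S|=r-2}\prod_{i\in S} n_i>0$), forcing $k=m-1$. I expect the main delicate point to be the bookkeeping in the symmetrization step --- verifying that the twin-pair count strictly increases while no existing twin pair is destroyed --- since the parts-balancing calculation is routine.
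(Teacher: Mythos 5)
The paper does not prove this lemma --- it is quoted from Zykov --- so your argument stands on its own; the symmetrization route is the standard one, and your first step (the switch preserves $K_m$-freeness and does not decrease $N_r$) and your last step (balancing and splitting parts) are fine. The gap is exactly in the step you flagged: the claim that ``no existing twin pair is destroyed'' is false. For $x,y\neq v$ the verification does go through: $N_{G'}(x)$ agrees with $N_{G^*}(x)$ off $v$ and contains $v$ precisely when $x\in N_{G^*}(u)$, and $N_{G^*}(x)=N_{G^*}(y)$ forces $u\in N_{G^*}(x)\Leftrightarrow u\in N_{G^*}(y)$, so twins among $V\setminus\{v\}$ stay twins. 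But every twin pair \emph{involving $v$ itself} is destroyed: if $N_{G^*}(z)=N_{G^*}(v)$ with $z\neq v$, then $N_{G'}(v)=N_{G^*}(u)$ while $N_{G'}(z)$ is $N_{G^*}(v)$ possibly augmented by $v$; equality would force either $N_{G^*}(u)=N_{G^*}(v)$ or $v\in N_{G^*}(u)$, both excluded. If $a$ and $b$ denote the sizes of the twin classes of $u$ and of $v$ in $G^*$, these two classes contribute $a(a-1)+b(b-1)$ ordered pairs before the switch and (at least) $(a+1)a+(b-1)(b-2)$ after, a net change of $2(a-b+1)$, which is negative whenever $b>a+1$. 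So the contradiction you want does not follow as written.

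The repair needs one more observation. In an extremal $G$ every non-adjacent pair already satisfies $f(u)=f(v)$: if $f(u)>f(v)$, the switch strictly increases $N_r$, contradicting extremality outright. Hence for a non-adjacent non-twin pair you may symmetrize in \emph{either} direction while staying extremal; choose to move the vertex lying in the smaller (or equal) twin class onto the other, so that $a\geq b$ in the computation above. Then the net change within these two classes is $2(a-b+1)\geq 2>0$, all pairs outside them are preserved (and new ones can only help), and the desired contradiction with the choice of $G^*$ is restored. One further small point in the final step: the strict gains from rebalancing and from splitting a part require the elementary symmetric function $e_{r-2}$ of the remaining parts to be positive, i.e.\ the graph to have at least $r$ (resp.\ $r-1$) parts; this holds because an extremal graph must satisfy $N_r>0$, but it should be said.
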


%\begin{lem}(Alon and Shikhelman \cite{alon2016}) \label{alon2016}
%	If $H$ is a graph with chromatic number $\chi(H) =k >r$, then $\ex(n,K_r,H) =(1+o(1)) \binom{k-1}{r} (\frac{n}{k-1})^r$.
% \end{lem}

\section{$\{P_k,K_m\}$-free graphs with large minimum degree}\label{sec3}

\hspace{1.5em}In this section, we characterize the structure of $\{P_k,K_m\}$-free graph $G$ with $\delta(G)\geq \delta_k$. An end block of $G$ is one having exactly one cut vertex. To state our results, we firstly define several graphs.  For $\delta_{k} \leq m-2$, let $G_1(n,k)=K_1\vee tK_{\delta_k}$ where $n=1+t\delta_{k}$ and $t\geq 1$,
$G_2(n,k)$ ($k$ is odd) be a graph on $n$ vertices obtained by joining the centres of $G_1(n_1,k)$ and $G_1(n_2,k)$, where $n=n_1+n_2$, and $G_3(n,k)$ ($k$ is odd) be a graph on $n$ vertices obtained from $G_1(n-1,k)$ by replacing a block $K_{\delta_k+1 }$ with a  $K_m$-free block $B$ of order $\delta_k+2$ such that the resulting graph has minimum degree at least $\delta_k$. 
Let $G_4(n)$ ($G_5(n)$) be an $n$-vertex graph obtained by joining the centre of $G_1(n_1,7)$ to all leaves of $K_{1,n_2-2}$ (all vetices of $K_{1,n_2-2}$), where $n_2\geq 4$, $n=n_1+n_2-1$ (See Figure \ref{graph}).

\begin{figure}[h]
	\centering
	\begin{tikzpicture}
		\filldraw[fill=black!8] plot[smooth, tension = 0.9]
		coordinates{(0,0) (60:1) (90:1.6) (120:1) (0,0)};
		\filldraw[fill=black!8] plot[smooth, tension = 0.9]
		coordinates{(0,0) (180:1) (210:1.6) (240:1) (0,0)};
		\filldraw[fill=black!8] plot[smooth, tension = 0.9]
		coordinates{(0,0) (-60:1) (-30:1.6) (0:1) (0,0)};
		
		\filldraw
		(0,0)
		++(-90:0.8) circle (0.02) ++(0:0.1) circle (0.02) ++(180:0.2) circle (0.02);
		
		\filldraw
		(0,0) circle (0.03);
		
		\node at (0,0.8) {\footnotesize  $K_{\delta_k+1}$};
		
		\node at (0,-1.7) {$G_1(n,k)$};
	\end{tikzpicture}\qquad
	\begin{tikzpicture}
		\filldraw[fill=black!8] plot[smooth, tension = 0.9]
		coordinates{(0,0) (100:1) (130:1.6) (160:1) (0,0)};
		\filldraw[fill=black!8] plot[smooth, tension = 0.9]
		coordinates{(0,0) (200:1) (230:1.6) (260:1) (0,0)};
		
		\draw[solid] (0,0) -- (1,0);
		
		\filldraw[fill=black!8] plot[smooth, tension = 0.9]
		coordinates{(1,0)  ({1+cos(20)},{sin(20)})  ({1+1.6*cos(50)},{1.6*sin(50)})  ({1+cos(80)},{sin(80)}) (1,0) };
		\filldraw[fill=black!8] plot[smooth, tension = 0.9]
		coordinates{(1,0)  ({1+cos(-20)},{sin(-20)})  ({1+1.6*cos(-50)},{1.6*sin(-50)})  ({1+cos(-80)},{sin(-80)}) (1,0) };

		\filldraw
		(0,0) ++(180:0.9) circle (0.02) ++(90:0.1) circle (0.02) ++(-90:0.2) circle (0.02);
		\filldraw
		(0,0) ++(0:1.9) circle (0.02) ++(90:0.1) circle (0.02) ++(-90:0.2) circle (0.02);
		
		\filldraw  (0,0) circle (0.05);
		\filldraw  (1,0) circle (0.05);
		
		\node at (0.5,-1.95) {$G_2(n,k)$};
	\end{tikzpicture}\qquad
	\begin{tikzpicture}
		\filldraw[fill=black!8] plot[smooth, tension = 0.9]
		coordinates{(0,0) (60:1) (90:1.6) (120:1) (0,0)};
		\filldraw[fill=black!8] plot[smooth, tension = 0.9]
		coordinates{(0,0) (180:1) (210:1.6) (240:1) (0,0)};
		\filldraw[fill=red!8] plot[smooth, tension = 0.9]
		coordinates{(0,0) (-60:1) (-30:1.6) (0:1) (0,0)};
		
		\filldraw
		(0,0)
		++(150:0.8) circle (0.02) ++(60:0.1) circle (0.02) ++(-120:0.2) circle (0.02);
		
		\filldraw
		(0,0) circle (0.05);
		
		\node at (0.8,-0.5) {\footnotesize $B$};
		
		\node at (0,-1.7) {$G_3(n,k)$};
	\end{tikzpicture}\\[15pt]
\begin{tikzpicture}	
	\filldraw
	(-0.5,0) circle (0.03)
	--(0,-1.4) circle (0.03)
	--(-0.3,-1.4) circle (0.03)
	--(-0.5,0) circle (0.03)
	--(-1,-1.4) circle (0.03)
	--(-1.3,-1.4) circle (0.03)
	--(-0.5,0) circle (0.03)
	--(0.3,-1.4) circle (0.03)
	--(0.5,0) circle (0.03)
	--(0.6,-1.4) circle (0.03)
	--(-0.5,0) circle (0.03)
	--(1.3,-1.4) circle (0.03)
	--(0.5,0) circle (0.03);
	
	\filldraw
	(-0.65,-1.4)
	circle (0.015) ++(0:0.1) circle (0.015) ++(180:0.2) circle (0.015);
	
	\filldraw
	(0.93,-1.4)
	circle (0.015) ++(0:0.1) circle (0.015) ++(180:0.2) circle (0.015);
	
	\node at (0,-2) {$G_4(n)$};
\end{tikzpicture}\qquad\qquad
\begin{tikzpicture}
	\filldraw
	(-0.5,0) circle (0.03)
	--(0,-1.4) circle (0.03)
	--(-0.3,-1.4) circle (0.03)
	--(-0.5,0) circle (0.03)
	--(-1,-1.4) circle (0.03)
	--(-1.3,-1.4) circle (0.03)
	--(-0.5,0) circle (0.03)
	--(0.3,-1.4) circle (0.03)
	--(0.5,0) circle (0.03)
	--(0.6,-1.4) circle (0.03)
	--(-0.5,0) circle (0.03)
	--(1.3,-1.4) circle (0.03)
	--(0.5,0) circle (0.03);
	
	\filldraw
	(-0.5,0) --(0.5,0) ;

	\filldraw
	(-0.65,-1.4)
	circle (0.015) ++(0:0.1) circle (0.015) ++(180:0.2) circle (0.015);
	
	\filldraw
	(0.93,-1.4)
	circle (0.015) ++(0:0.1) circle (0.015) ++(180:0.2) circle (0.015);
	
	\node at (0,-2) {$G_5(n)$};  	
\end{tikzpicture}

	\caption{The graphs $G_i(n,k)$ ($i=1,2,3$) and $G_i(n)$ ($i=4,5$)}
	\label{graph}
\end{figure}
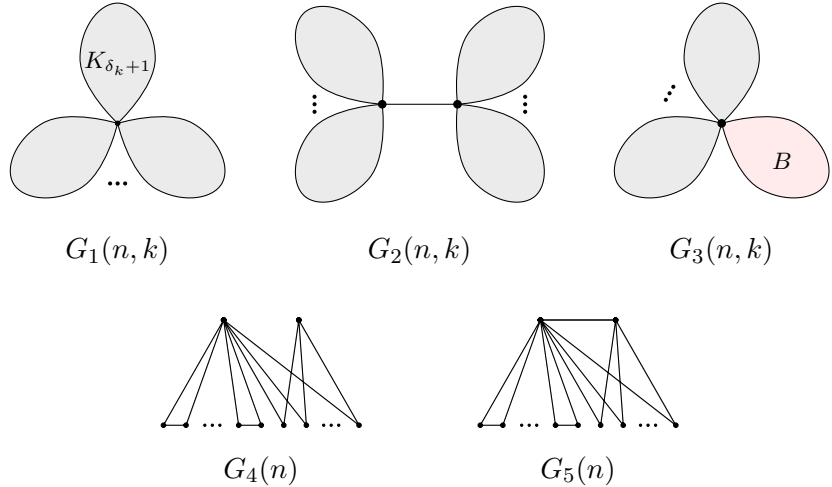
\begin{theo}\label{max}
	Let $G$ be a $\{P_k, K_m\}$-free connected graph with $\delta(G)\geq \delta_k $ and $|G|=n\geq k$. Then one of the following holds:
	
	{\rm (1)} $G\subseteq \begin{cases}
		H \vee I_{n-\delta_k},	& \text{ if } k \text{ is even},  \\
		H \vee \left( I_{n-\delta_k-2} \cup K_2 \right),	& \text{ if } k \text{ is odd},
	\end{cases}$ \\ where $H$ is a $K_{m-1}$-free graph on $\delta_k$ vertices;
	
	{\rm (2)} $m\ge \delta_k+2$ and $G\cong G_1(n,k)$, $G_2(n,k)$ or $G_3(n,k)$;
	
	{\rm (3)} $k=7$, $m\geq 4$ and $G\cong G_4(n)$ or $G_5(n)$;
	
	{\rm (4)} $k=9$, $m\geq 4$ and $G=I_2 \vee \frac{n-2}{2} K_2$ or $m\geq 5$ and $G=K_2 \vee \frac{n-2}{2} K_2$.
\end{theo}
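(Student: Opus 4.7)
The plan is to analyze a longest path $P=v_1v_2\cdots v_p$ in $G$, where $p\leq k-1$ by $P_k$-freeness. Since the endpoints $v_1$ and $v_p$ cannot extend $P$, all of their neighbors lie on $P$, giving $p\geq \delta_k+1$. The argument then splits according to whether $G$ is $2$-connected.

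\textbf{The $2$-connected case.} I would first apply Lemma \ref{lem5} to $G$. Since $p(G)\leq k-1<n$, either $G$ contains a strong dominating cycle $C$, or $\sigma_3(G)\leq k$. In the latter case $3\delta_k\leq\sigma_3(G)\leq k$ forces $k\in\{4,5,6,7,9\}$, which after a short direct analysis produces the sporadic graphs appearing in items (3) and (4). When the dominating cycle $C$ exists, $|C|\leq k-1$ by $P_k$-freeness, and $V(G)\setminus V(C)$ is independent with every off-cycle vertex attached only to $C$. Applying Lemma \ref{lem3} to the subgraph induced on $V(C)\cup\{u\}$ for each $u\notin V(C)$ gives $d(u)=d_C(u)=\delta_k$, and the non-adjacency conditions of Lemma \ref{lem3}(ii) propagate the neighbors of every such $u$ into a common set $S\subseteq V(C)$ of size $\delta_k$ (with odd $k$ leaving an extra ``unused'' edge on $C$, i.e. a $K_2$ in the cotype). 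This yields the join structure of item (1) with $H=G[S]$, which is $K_{m-1}$-free because $G$ is $K_m$-free; by Lemma \ref{ThKr} one may further identify $H$ with a subgraph of $T(\delta_k,m-2)$ when extremality is invoked.

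\textbf{The non-$2$-connected case.} Let $B$ be an end block of $G$ with cut vertex $w$. Every vertex of $V(B)\setminus\{w\}$ has all its $G$-neighbors inside $B$, so $|B|\geq \delta_k+1$. If $|B|=\delta_k+1$, then $B\cong K_{\delta_k+1}$, which forces $m\geq \delta_k+2$; iterating the end-block analysis (and using Lemma \ref{lem2} to bound how many cut vertices can appear before a $P_k$ is forced through two blocks) shows that the block tree must consist of copies of $K_{\delta_k+1}$ sharing one cut vertex, or two such bouquets joined by an edge, or one clique block replaced by a $K_m$-free block of order $\delta_k+2$; these are precisely $G_1(n,k)$, $G_2(n,k)$ and $G_3(n,k)$. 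If $|B|\geq \delta_k+2$, then $B$ is $2$-connected and meets the hypotheses of the $2$-connected case up to the anomaly at $w$; together with path concatenation through $w$, Lemma \ref{lem2} then forces everything outside $B$ to be negligible, reducing back to item (1) or a sporadic case.

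\textbf{Main obstacle.} The hardest step is the structural analysis inside the strong-dominating-cycle sub-case: ruling out that distinct off-cycle vertices attach to distinct $\delta_k$-subsets of $C$, and pinning down exactly the join structure $H\vee I_{n-\delta_k}$ or $H\vee(I_{n-\delta_k-2}\cup K_2)$. This requires repeated application of Lemma \ref{lem3}(ii) together with path-rotations along $C$, so that each forbidden non-edge discovered for one off-cycle vertex forces the same forbidden non-edges for every other off-cycle vertex. Equally delicate is the sporadic analysis when $\sigma_3(G)\leq k$ and $\delta_k\in\{1,2,3\}$, where the absence of a dominating cycle must be used together with the $K_m$-free condition to classify, by hand, the graphs $G_4(n)$, $G_5(n)$, $I_2\vee\tfrac{n-2}{2}K_2$ and $K_2\vee\tfrac{n-2}{2}K_2$, and to verify that no further graphs survive outside the listed four items.
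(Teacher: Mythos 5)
Your skeleton is essentially the paper's: split on $2$-connectivity, use Saito's lemma (Lemma \ref{lem5}) to either obtain a dominating structure or force $k\le 9$, read off attachment patterns with Lemma \ref{lem3}, and classify end blocks otherwise. However, there is a concrete misrouting that leaves a real gap. The graphs $G_4(n)$ and $G_5(n)$ of item (3) each have a cut vertex (the centre of the $G_1(n_1,7)$-part, off which at least one triangle hangs), so they are not $2$-connected and must be produced by your non-$2$-connected branch, not by the $2$-connected sporadic analysis where you place them. But your non-$2$-connected branch only identifies $G_1,G_2,G_3$ and disposes of the remaining case ($|B|\ge\delta_k+2$) with ``reduces back to item (1) or a sporadic case.'' That remaining case is exactly where the paper does its hardest block work (Claim 2 of Lemma \ref{lem_block}): an end block whose longest path from the cut vertex has $\delta_k+2$ vertices either has order exactly $\delta_k+2$ (giving $G_3$) or, after a fan-plus-rotation argument, forces $\delta_k+2=4$, $k=7$, and every vertex off that path having exactly two prescribed neighbours, whence $G\cong G_4(n)$ or $G_5(n)$. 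Without this argument item (3) is unaccounted for in your classification.

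Two further points would need repair. (i) Your $2$-connected dichotomy is ``strong dominating cycle exists / $\sigma_3(G)\le k$,'' and you assert the second branch yields only sporadic graphs; but a $2$-connected graph can have a strong dominating path (hence belong to item (1)) without having a strong dominating cycle, so for small $k$ that branch also contains item-(1) graphs and your ``short direct analysis'' would have to redo the whole join-structure argument there. The paper sidesteps this by conditioning first on the existence of a strong dominating \emph{path} (Lemma \ref{lem_dominate}) and invoking Saito only when there is none, so that the residual case really is sporadic and in fact yields only $k=9$ and item (4). (ii) Applying Lemma \ref{lem3} to $G[V(C)\cup\{u\}]$ presupposes that this graph has no Hamiltonian path, which is automatic only once $|C|\ge k-1$; you must first bound $|C|$ from below (non-consecutive attachment gives $|C|\ge 2\delta_k$) and then treat the short lengths $|C|=k-2$ (and $k-3$ for odd $k$) separately, just as the paper handles dominating paths of lengths $k-2$ and $k-1$. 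None of this breaks the overall strategy, but as written the proof does not close.
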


In order to prove Theorem \ref{max}, we need some additional notations.   
If $u$ and $v$ are two vertices of a path $P$, we use $uPv$ to denote the segment of $P$ from $u$ to $v$. Denote by $\overrightarrow{P}$ the path $P$ with a given orientation and $\overleftarrow{P}$ the path with the inverse orientation. We use $u^{+i}$ and $u^{-i}$ to denote the $i$th successor and predecessor of $u$ on $P$ along a given orientation, respectively. In particular, $u^+=u^{+1}$ and $u^-=u^{-1}$.
\vskip 2mm
Before  starting to prove Theorem \ref{max}, we need  the following two lemmas.  

\begin{lem}\label{lem_block}
	Let $G$ be an $n$-vertex $\{P_k, K_m\}$-free connected graph but not $2$-connected. If any vertex except the cut vertex in the end blocks  has degree at least $\delta_k$, then $G\cong G_1(n,k)$, $G_2(n,k)$, $G_3(n,k)$ with $m\ge \delta_k+2$, or $G \cong G_4(n)$, $G_5(n)$ with  $k=7$, $m\geq 4$.
\end{lem}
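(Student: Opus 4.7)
The plan is to first analyze an arbitrary end block of $G$ and then reconstruct $G$ from its block tree.

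Let $B$ be an end block of $G$ with cut vertex $c$. Since $B$ is an end block, every $v\in V(B)\setminus\{c\}$ has all of its $G$-neighbours in $B$, so the hypothesis gives $d_B(v)\ge\delta_k$; in particular $|B|\ge\delta_k+1$. If $|B|\ge 3$ then $B$ is $2$-connected, and Lemma~\ref{lem2} applied to $B$ with $x=c$ and any $y\ne c$ yields a $(c,y)$-path in $B$ of length at least $\delta_k$. Since $G$ is connected but not $2$-connected, there is a second end block $B'$ with cut vertex $c'$, and the same argument gives a $(c',y')$-path of length $\ge\delta_k$ in $B'$. Concatenating these two paths with any $(c,c')$-path in $G$ and using $P_k$-freeness bounds the total number of edges by $k-2$; this already shows that the longest $(c,y)$-path in $B$ has length at most $\delta_k$ when $k$ is even and at most $\delta_k+1$ when $k$ is odd, and that two distinct cut vertices can only occur when they are adjacent and $k$ is odd.

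Using this path-length bound together with the degree condition, I classify $B$ into the following cases. (a) If $|B|=\delta_k+1$, the non-cut vertices form a clique $K_{\delta_k}$ and each is adjacent to $c$, so $B\cong K_{\delta_k+1}$. (b) If $|B|=\delta_k+2$, each non-cut vertex has at most one non-neighbour in $B$; combined with $K_m$-freeness, $B$ is exactly the special $K_m$-free block of order $\delta_k+2$ in the definition of $G_3$. A Hamilton-$(c,y)$-path of length $\delta_k+1$ in this block, when extended through $c$ into a neighbouring $K_{\delta_k+1}$, uses $2\delta_k+2$ vertices, which equals $k$ when $k$ is even; thus Case (b) forces $k$ to be odd. (c) If $|B|\ge\delta_k+3$, the degree and $K_m$-free conditions force $B$ to be essentially a complete bipartite graph $K_{\delta_k,s}$ or $K_{\delta_k}\vee I_s$; the longest $(c,y)$-path in $B$ has $2\delta_k$ vertices, and extending it through $c$ into a neighbouring $K_{\delta_k+1}$ gives a path on $3\delta_k$ vertices. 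This forces $3\delta_k<k$, which together with $\delta_k\ge 2$ leaves only $k=7$ and $\delta_k=2$; these are the end blocks of $G_4(n)$ and $G_5(n)$.

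Now I reconstruct $G$ from its block tree. If $G$ has a unique cut vertex, then every block is an end block containing that cut vertex, and Step 2 pins down each block as $K_{\delta_k+1}$ or one of the exceptional blocks; the resulting graphs are $G_1(n,k)$ (all blocks $K_{\delta_k+1}$), $G_3(n,k)$ (one exceptional $(\delta_k+2)$-block, only for $k$ odd), or $G_4(n), G_5(n)$ (for $k=7$). If $G$ has more than one cut vertex, Step 1 forces $k$ to be odd and the distinct cut vertices $c_1,c_2$ to be adjacent; a short argument then shows that there are exactly two such cut vertices, each being the common vertex of several $K_{\delta_k+1}$ blocks (otherwise a longer path appears and yields a $P_k$), so $G\cong G_2(n,k)$. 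The $K_m$-free condition in each case is equivalent to $m\ge\delta_k+2$ for $G_1, G_2, G_3$ (whose maximum clique is $K_{\delta_k+1}$) and to $m\ge 4$ for $G_4, G_5$ (whose maximum clique is $K_3$).

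The main obstacle is the classification in Step 2, in particular isolating the $k=7$ bipartite exception in Case (c) and establishing the parity restrictions that confine $G_3$ and $G_2$ to odd $k$. A secondary difficulty is that the cut vertex $c$ may itself have small degree in $B$, so the structural arguments cannot use a uniform minimum-degree hypothesis inside $B$ and must instead rely on Lemma~\ref{lem2}'s flexibility in exempting two specified vertices from the degree requirement.
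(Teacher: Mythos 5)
Your overall skeleton matches the paper's proof: use Lemma~\ref{lem2} to get a path of length at least $\delta_k$ from the cut vertex into each end block, concatenate two such paths to bound the longest path from a cut vertex inside a single end block (by $\delta_k$ edges for $k$ even, $\delta_k+1$ for $k$ odd) and to force two distinct cut vertices to be adjacent with $k$ odd, then classify the end blocks and reassemble into $G_1,\dots,G_5$. Your cases (a) and (b) correspond to the paper's Claim~1 and the first half of its Claim~2, and the reassembly step is essentially the paper's Figure~2 dichotomy. (Two small omissions there are easily repaired: you never argue that at most one end block can be exceptional, and in case (b) the existence of a Hamiltonian $(c,y)$-path of $B$ is asserted rather than derived; both follow from the same concatenation/longest-path arguments you already use.)

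The genuine gap is your case (c), $|B|\ge\delta_k+3$. You assert that ``the degree and $K_m$-free conditions force $B$ to be essentially a complete bipartite graph $K_{\delta_k,s}$ or $K_{\delta_k}\vee I_s$,'' and then read off the longest path ($2\delta_k$ vertices) to conclude $3\delta_k<k$ and hence $k=7$. But the structural claim is exactly the hard content of this case and no argument is offered for it; the degree and $K_m$-free hypotheses alone certainly do not force a complete bipartite shape (for large $m$ an end block could a priori be far denser), so the only leverage is $P_k$-freeness via the longest path, and extracting the structure from that is nontrivial. The paper does it by taking a longest path $P=v\cdots u$ from the cut vertex $v$ in $B$, a vertex $w\notin V(P)$, and a $2$-fan from $w$ to $V(P)$ (Lemma~\ref{lem}): one shows the fan must terminate at $\{v,x\}$, that $x=v^{+2}$, that $N(u)$ and $N(x^-)$ each cover all of $P$ except one vertex, and finally that $u=x^{+}$, which forces $|P|=4$, $\delta_k=2$, $k=7$, and $N(w)=\{v,x\}$ for every $w\notin V(P)$ — the bipartite structure and the value of $k$ come out \emph{simultaneously} from this analysis, not in the order you propose. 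As written, your deduction ``structure first for general $\delta_k$, then $3\delta_k<k$'' rests on an unproved (and not obviously provable in that order) claim, and you yourself flag it as the main obstacle without resolving it; this step needs the full $2$-fan argument to be supplied.
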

\begin{proof}
 By Lemma \ref{lem2}, each end block contains a path of length at least $\delta_k$ starting from the cut vertex. Furthermore, either all end blocks share one cut vertex or  there are two end blocks which have no common vertices. For the second case, suppose $B_1$ and $B_2$ are two end blocks that thay have no common vertices. Let $v_i$ be the cut vertex in $B_i$ and $P_i$ be the longest path starting from $v_i$, $i=1,2$, and $Q$ a longest path  connecting $v_1$ and $v_2$. Then $P_1v_1Qv_2P_2$ is a  path of length at least $2\delta_k+|Q|-1$. Since $G$ is $P_k$-free, we can deduce $|Q|= 2$. Thus, the structure of $G$ can only be as follows. (see Figure \ref{fig1})
\vskip 1mm
\noindent	 \textbf{Structure 1}: all end blocks share one cut vertex, and the longest path starting from the cut vertex in each end block is $P_{\delta_k+1 }$ or $P_{\delta_k+2}$. Furthermore, at most one end block has a path $P_{\delta_k+2}$ starting from the cut vertex, and if this happens, then $k$ is odd; 
\vskip 1mm	
\noindent \textbf{Structure 2}:  $G$ has only one non-end block and the block is a $K_2$,  and the longest path starting from the cut vertex in each end block is $P_{\delta_k+1 }$. Moreover,  $k$ is odd.	
	\vskip 2mm
	\begin{figure}[h]
	\centering
	\begin{tikzpicture}
		\draw[solid] plot[smooth, tension = 0.9]
		coordinates{(0,0) (60:1) (90:1.6) (120:1) (0,0)};
		\draw[solid] plot[smooth, tension = 0.9]
		coordinates{(0,0) (180:1) (210:1.6) (240:1) (0,0)};
		\draw[solid] plot[smooth, tension = 0.9]
		coordinates{(0,0) (-60:1) (-30:1.6) (0:1) (0,0)};
		
		\draw[blue] (0,0) -- (90:1.3);
		\draw[blue] (0,0) -- (210:1.3);
		\draw[red] (0,0) -- (-30:1.3);
		
		\node[blue] at (-0.94,1.2) {\footnotesize  $P_{\delta_k+1}$};
		\node[blue] at (-1.8,-1) {\footnotesize  $P_{\delta_k+1}$};
		\node[red] at (2.5,-1) {\footnotesize  $P_{\delta_k+1}$ or $P_{\delta_k+2}$};
		
		\filldraw
		(0,0)
		++(150:0.8) circle (0.02) ++(60:0.1) circle (0.02) ++(-120:0.2) circle (0.02);
		
		\filldraw  (0,0) circle (0.05);
		
		\node at (0,-1.85) {(1)};
	\end{tikzpicture}\qquad\quad
\begin{tikzpicture}
	\draw[solid] plot[smooth, tension = 0.9]
	coordinates{(0,0) (100:1) (130:1.6) (160:1) (0,0)};
	\draw[solid] plot[smooth, tension = 0.9]
	coordinates{(0,0) (200:1) (230:1.6) (260:1) (0,0)};
	
	\draw[solid] (0,0) -- (1,0);
	
	\draw[solid] plot[smooth, tension = 1]
	coordinates{(1,0)  ({1+cos(20)},{sin(20)})  ({1+1.6*cos(50)},{1.6*sin(50)})  ({1+cos(80)},{sin(80)}) (1,0) };
	\draw[solid] plot[smooth, tension = 1]
	coordinates{(1,0)  ({1+cos(-20)},{sin(-20)})  ({1+1.6*cos(-50)},{1.6*sin(-50)})  ({1+cos(-80)},{sin(-80)}) (1,0) };
	
	\draw[blue] (0,0) -- (130:1.3);
	\draw[blue] (0,0) -- (230:1.3);
	\draw[blue] (1,0) -- ++(50:1.3);
	\draw[blue] (1,0) -- ++(-50:1.3);
	
	\node[blue] at (-1.68,1) {\footnotesize  $P_{\delta_k+1}$};
\node[blue] at (2.65,1) {\footnotesize  $P_{\delta_k+1}$};
\node[blue] at (-1.68,-1) {\footnotesize  $P_{\delta_k+1}$};
\node[blue] at (2.65,-1) {\footnotesize  $P_{\delta_k+1}$};
	
	\filldraw
	(0,0) ++(180:0.9) circle (0.02) ++(90:0.1) circle (0.02) ++(-90:0.2) circle (0.02);
	\filldraw
	(0,0) ++(0:1.9) circle (0.02) ++(90:0.1) circle (0.02) ++(-90:0.2) circle (0.02);
	
	\filldraw  (0,0) circle (0.05);
	\filldraw  (1,0) circle (0.05);
	
	\node at (0.5,-1.95) {(2)};
	\end{tikzpicture}
	\caption{The structure of $G$}
	\label{fig1}
\end{figure}
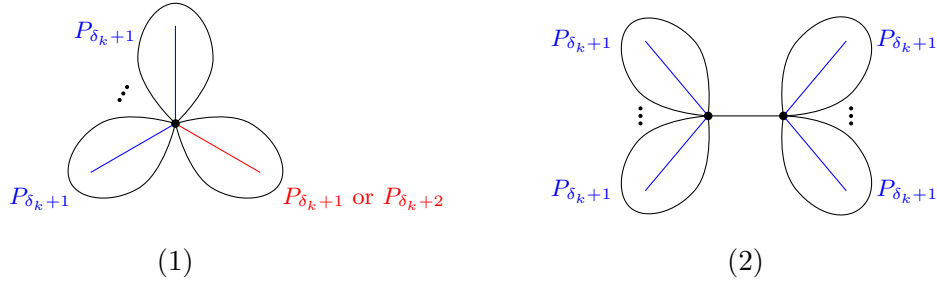	

By this two structures, we also have the following claims.

\vskip 1mm
\noindent\textbf{Claim 1.} If the longest path starting from the cut vertex in the end block $B$ is $P_{\delta_k+1 }$, then $B= K_{\delta_k+1}$.
\begin{proof}
Let $P$ be the longest path starting from the cut vertex $v$ in $B$ and the other end is $u$. Since $d(u)\geq \delta_k$, $u$ is adjacent to all other vertices of $P$. If there is a vertex $w\notin V(P)$, then since $B$ is $2$-connected, there is a $2$-fan from $w$ to $V(P)$ by Lemma \ref{lem}. Let $z(\neq v)$ be the vertex on $P$ connected to $w$ by a path $Q \subseteq G - P$. Then $v \overrightarrow{P} z^- u \overleftarrow{P}z Q w$ will be a longer path starting from $v$,  a contradiction. Thus $V(B)=V(P)$ and Claim 1 follows by the condition of degree.
\end{proof}
	
	\noindent\textbf{Claim 2.} If some end block $B$ has a path $P_{\delta_k+2}$ starting from the cut vertex $v$ in $B$, then $G\cong G_3(n,k)$ with $m\ge \delta_k+2$, or $G \cong G_4(n)$, $G_5(n)$ with $k=7$, $m\geq 4$.	
	
 \begin{proof}
By the assumption,  the structure of $G$ is as shown in Figure 2(1). If $|B|=\delta_k+2$, then $B$ is a $K_m$-free block with $d(u)\geq \delta_k$ for any $u \in B-v$. The other blocks are all $K_{\delta_k+1}$ by Claim 1, and hence $G \cong G_3(n,k)$ and $m\ge \delta_k+2$.

Next we may assume $|B|> \delta_k+2$ and $P$ is a longest path of $B$ starting from the cut vertex $v$ in $B$ and ending at $u$.  By the maximality of $P$,  $N(u)\subseteq V(P)$.  Let $w$ be any vertex not in $P$. Since $B$ is $2$-connected, by Lemma \ref{lem}, there is a $2$-fan from $w$ to $V(P)$. Assume that $x\not=v$ in $P$ and $Q$ is a path connecting $w$ and $x$ in a 2-fan from $w$ to $V(P)$.  Then $x^-\notin N(u)$, for otherwise $v \overrightarrow{P} x^- u \overleftarrow{P}xQ w$
is a  path  longer than $P$, a contradiction. Therefore, $N(u)=V(P)-\{u,x^-\}$ and the terminal vertices of the 2-fan from $w$ to $V(P)$  must be $\{v, x\}$.

Now let $wQ_1v$, $wQ_2x$ be the $2$-fan from $w$ to $V(P)$. If $|vPx|\geq 4$, then we can see $vQ_1 w Q_2 x \overrightarrow{P} u v^+ \overrightarrow{P} x^-$ is a path longer than $P$, a contradiction. By the maximality of $P$, $x\not=v^+$ and hence $x=v^{+2}$.
 Moreover, $x^-$ has no neighbor in $V(B-P)$, otherwise $vu\overleftarrow{P}x^-x'$ is a path longer than $P$ for some $x'\in V(B-P)$.  Thus, note that $d(x^-) \geq \delta_k$, we have $N(x^-)=V(P)-\{x^-,u\}$.  If $|x^-Pu|\geq 4$, then $vQ_1 w Q_2 xx^-x^+ \overrightarrow{P} u$ is a path longer than $P$, again a contradiction. 
 Hence, $u=x^+$, that is $|P|=\delta_k+2=4$.
 Since there is at least a block $K_{\delta_k+1}=K_3$ in $G$ by Claim 1 (see Figure \ref{fig1}), we have $m\geq 4$ and $k=7$. By the maximality of $P=P_4$,  we have $N(w)=\{v,x\}$ for any $w\in V(G-P)$. If $v$ is not adjacent to $x$, then $G\cong G_4(n)$ and $G\cong G_5(n)$ otherwise. %$B=K_2 \vee  \overline{K_{|B|-2}}$

	Hence, Claim 2 holds.
\end{proof}	
	Now, if the structure of $G$ is as shown in Figure 2(1), then by Claims 1 and  2, $G\cong G_1(n,k)$, $G_3(n,k)$ and $m\ge \delta_k+2$, or $G \cong G_4(n)$, $G_5(n)$ with $k=7$, $m\geq 4$. If the structure of $G$ is as shown in Figure 2(2), then $G\cong G_2(n,k)$ and $k$ is odd. We complete the proof of this lemma.
\end{proof}

\begin{lem}\label{lem_dominate}
	Let $G$ be a 2-connected  $\{P_k, K_m\}$-free graph of order $n\geq k$. If $G$ contains a strong dominating path $P$ such that $d(u)\geq \delta_k$ for any $u\in V(G-P)$, then
 $$G\subseteq \begin{cases}
		H \vee I_{n-\delta_k},	& \text{ if } k \text{ is even},  \\
		H \vee \left( I_{n-\delta_k-2} \cup K_2 \right),	& \text{ if } k \text{ is odd},
	\end{cases}$$  where $H$ is a $K_{m-1}$-free graph on $\delta_k$ vertices.
\end{lem}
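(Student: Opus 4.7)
The plan is to take $P = v_1 v_2 \cdots v_p$ to be a \emph{longest} strong dominating path in $G$; since $G$ is $P_k$-free we have $p \leq k-1$, and since $n \geq k$ the set $V(G-P)$ is non-empty. For any $u \in V(G-P)$, the maximality of $P$ immediately restricts $N(u)$: if $v_1 \in N(u)$ then $u v_1 v_2 \cdots v_p$ is a longer path that remains strong dominating (every vertex outside this new path still has its neighbourhood in $V(P)$), contradicting the choice of $P$; symmetrically $v_p \notin N(u)$, and if $v_i, v_{i+1} \in N(u)$ for some $i$, inserting $u$ between them yields a longer strong dominating path. Hence $N(u) \subseteq \{v_2, \ldots, v_{p-1}\}$ with no two consecutive vertices of $P$, so $|N(u)| \leq \lceil (p-2)/2 \rceil$; together with $|N(u)| \geq \delta_k$ this forces $p \geq 2\delta_k+1$, hence $p = k-1$ when $k$ is even. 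For odd $k$ the only borderline possibility is $p = k-2$, in which case the unique maximum independent set $\{v_2, v_4, \ldots, v_{k-3}\}$ of the even-order path $v_2\cdots v_{k-3}$ is forced to be $N(u)$ for every $u \in V(G-P)$; using $2$-connectedness of $G$ together with $\delta(G)\ge 2$ at $v_1, v_{k-2}$ (which by maximality have no neighbours in $V(G-P)$) I would pinpoint a chord of $P$ which, combined with two distinct vertices $u, w \in V(G-P)$, produces a $P_k$ in $G$, a contradiction.

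Once $p = k-1$ is established, the induced subgraph $H := G[V(P) \cup \{u\}]$ has exactly $k$ vertices, contains $P$ as a $P_{k-1}$, and has no Hamiltonian path (else $G \supseteq P_k$). Lemma~\ref{lem3}(i) then yields $d_H(u) \leq \delta_k$, and since $N(u) \subseteq V(P)$ this gives $d_G(u) = \delta_k$. Writing $N(u) = \{v_{i_1}, \ldots, v_{i_{\delta_k}}\}$ with $i_1 < \cdots < i_{\delta_k}$, Lemma~\ref{lem3}(ii) further yields that $\{v_{i_j+1}\}_{j=1}^{\delta_k}$ and $\{v_{i_j-1}\}_{j=1}^{\delta_k}$ are independent sets in $G$ and that $v_1 v_{i_r+1}, v_{i_r-1} v_{k-1} \notin E(G)$ for every $r$. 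To conclude that $S := N(u)$ is independent of the choice of $u$: when $k$ is even, the path $v_2 \cdots v_{p-1}$ has odd order $k-3$ and admits a \emph{unique} maximum independent set $\{v_2, v_4, \ldots, v_{k-2}\}$ of size $\delta_k$, pinning $S$ down; when $k$ is odd there are several candidates, and the plan is to assume $N(u) \neq N(w)$ for some $u, w \in V(G-P)$ and splice $P$-segments with transitions $v_i \to u \to v_j$ and $v_{j'} \to w \to v_l$ at differing attachment vertices to construct a $P_k$, contradicting $P_k$-freeness.

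With a common $\delta_k$-element set $S$ in place, the target structure follows readily. The set $V(G-P)$ is independent in $G$ because $N(u) \subseteq V(P)$ for every $u \in V(G-P)$; the independence relations supplied by Lemma~\ref{lem3}(ii) together with the non-adjacency of $v_1, v_{k-1}$ to the shifted sets show that $V(P) \setminus S$ is independent when $k$ is even and contains exactly one edge (between the two consecutive $P$-vertices that are both outside $S$) when $k$ is odd; and no edges connect $V(G-P)$ to $V(P) \setminus S$ since $N(u) = S$. Thus $V(G) \setminus S$ induces $I_{n-\delta_k}$ when $k$ is even and $I_{n-\delta_k-2} \cup K_2$ when $k$ is odd. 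Finally, $G[S]$ must be $K_{m-1}$-free, for a $K_{m-1}$ in $G[S]$ together with any $u \in V(G-P)$ (which is adjacent to every vertex of $S$) would produce a $K_m$ in $G$, violating $K_m$-freeness. The main obstacle I foresee is the elimination of $p = k-2$ for odd $k$ and the ``same-neighbourhood'' step for odd $k$; both are $P_k$-construction arguments that rely crucially on the $2$-connectedness hypothesis, which is precisely what distinguishes Lemma~\ref{lem_dominate} from the exceptional non-$2$-connected cases $G_4, G_5$ appearing in Theorem~\ref{max}.
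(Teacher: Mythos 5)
Your overall strategy coincides with the paper's: take a longest strong dominating path $P=v_1\cdots v_p$, use maximality to show $N(u)$ avoids $v_1,v_p$ and consecutive vertices so that $k-1\ge p\ge 2\delta_k+1$, invoke Lemma~\ref{lem3} to pin down $N(u)$ and the independence of $V(P)\setminus N(u)$, and for odd $k$ with $p=k-1$ rule out two vertices of $G-P$ having different neighbourhoods by splicing a $P_k$. The latter splicing step is only sketched in your write-up, but it is exactly what the paper carries out (three explicit cases), so I regard that as an acceptable omission in a proposal.

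There is, however, one genuine error: your plan to \emph{eliminate} the case $p=k-2$ (for odd $k$) by producing a $P_k$ from a chord of $P$ and two vertices of $G-P$ cannot succeed, because that case actually occurs. Take $k=7$, $m=4$ and $G=K_2\vee I_{n-2}$: this graph is $2$-connected, $\{P_7,K_4\}$-free, its longest path has $5=k-2$ vertices, that path is strong dominating, and every vertex off it has degree $2=\delta_7$. So no contradiction is available, and any argument that claims one must be wrong. The correct resolution, as in the paper, is that in this case $N(u)=\{v_2,v_4,\dots,v_{k-3}\}$ is forced for every $u\in V(G-P)$ (which you already observed), $V(P)\setminus N(u)$ is independent by Lemma~\ref{lem3}, and hence $G\subseteq H\vee I_{n-\delta_k}$, which is a subgraph of $H\vee\left(I_{n-\delta_k-2}\cup K_2\right)$ and therefore already satisfies the conclusion for odd $k$. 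With that one repair --- accepting $p=k-2$ as a legitimate (and easy) subcase rather than a contradiction --- your argument lines up with the paper's proof.
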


\begin{proof}
	Let $P=v_1v_2\cdots v_{p}$ be a longest strong dominating path satisfying the condition. By the maximality of $P$, $N_{G-P} (v_1) = N_{G-P} (v_p) = \varnothing $ and any vertex of $G-P$ has no consecutive neighbors on $P$. Thus we have
	\begin{equation}\label{eq1}
	k-1\geq	|P| \geq  2 \cdot \delta_k  + 1,
	\end{equation}
which implies $|P|=k-2$ or $k-1$.

For the case $|P|=k-2$, it is easy to get that $k$ is odd and $N(u)=\{v_2,v_4,\ldots,v_{k-3}\}$ for every $u\in V(G-P)$.  Since $G$ is $K_m$-free, then $G[N(u)]$ is $K_{m-1}$-free. Moreover, $V(P)-N(u)$ is also independent by Lemma \ref{lem3}. Thus, $G\subseteq H \vee I_{n-\delta_k}$.

Now we assume that $|P|=k-1$. If $k$ is even, then $N_P(u)=\{v_2,v_4,\dots, v_{k-2}\}$. Since $G$ is $K_m$-free, $G[N(u)]$ is a $K_{m-1}$-free graph on $\delta_k$ vertices. By Lemma \ref{lem3}, $\{v_1,v_3,  \dots, v_{k-1}\}$ is an independent set. Thus $G \subseteq H \vee I_{n-\delta_k}$. If $k$ is odd, then either
	
	(i) $N_P(u) =\{ v_2, v_4, \dots, v_{k-3}\}$ (or its symmetric version $\{ v_3, v_5, \dots, v_{k-2}\}$), or
	
	(ii) $N_P(u) =\{ v_2, \dots, v_{2\ell}, v_{2\ell+3},  \dots, v_{k-2}\}$ ($1\leq \ell \leq \frac{k-5}{2}$).\\
If $k=5$, then $|P|=4$ and so $d(u)=1$,  which contradicts that $G$ is not 2-connected. Thus we have $k\geq 7$. Suppose that  $N_P(u_1)\not=N_P(u_2)$ for some $u_1 , u_2 \in V(G-P)$.  If  $u_1, u_2$ are both of type (i), then by symmetry we may assume $N(u_1)=\{ v_2, v_4, \dots, v_{k-3}\}$ and hence $N(u_2)= \{ v_3, v_5, \dots, v_{k-2}\} $,  and  $v_1 v_2 u_1 v_{k-3} v_{k-2} u_2 v_3 \overrightarrow{P} v_{k-4}$ is a  $P_k$ in $G$ (see Figure \ref{fig3}(1)), a contradiction. If $u_1$ is of type (i) and $u_2$ is of type (ii), then assume $N(u_1)=\{ v_2, v_4, \dots, v_{k-3}\}$ and $v_3 \overrightarrow{P} v_{k-3}u_1 v_2 u_2  v_{k-2} v_{k-1}$ is a  $P_k$ in $G$ (see Figure \ref{fig3}(2)), a contradiction.
If $u_1, u_2$ are both of type (ii), we may assume that $N(u_1)=\{ v_2, \dots, v_{2\ell_1}, v_{2\ell_1+3},  \dots, v_{k-2}\}$ and $N(u_2)= \{ v_2, \dots, v_{2\ell_2}, v_{2\ell_2+3},  \dots, v_{k-2}\} $ with $\ell_1 <\ell_2$, then $G$ has  a  $P_k=v_3 \overrightarrow{P} v_{2\ell_1+2} u_2 v_2 u_1  v_{2\ell_1+3} \overrightarrow{P} v_{k-1}$ (see Figure \ref{fig3}(3)), again a contradiction. Thus we have $N_P(u_1)=N_P(u_2)$ for any $u_1 , u_2 \in V(G-P)$. By Lemma \ref{lem3}, there is no other edge in $G-N(u)$ except for one in path $P$. Since $G$ is $K_m$-free, $G[N_P(u)]$ is $K_{m-1}$-free. Thus $G \subseteq H \vee \left( I_{n-\delta_k-2} \cup K_2 \right)$. %where $H$ is a $K_{m-1}$-free graph on $\delta_k$ vertices.
	
	Therefore, Lemma \ref{lem_dominate} holds.
\end{proof}	
	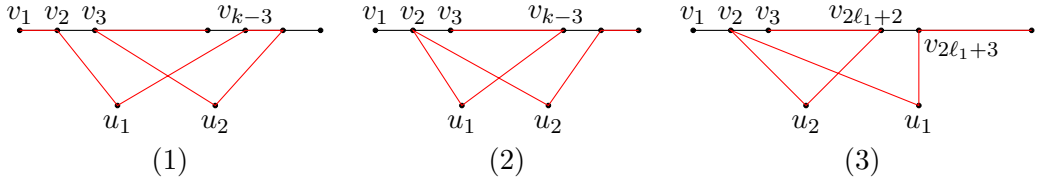
\begin{figure}[h]
		\centering
		\begin{tikzpicture}
			
			\filldraw
			(0,0) circle (0.03)
			--++(0.5,0) circle (0.03)
			--++(0.5,0) circle (0.03)
			--++(1.5,0) circle (0.03)
			--++(0.5,0) circle (0.03)
			--++(0.5,0) circle (0.03)
			--++(0.5,0) circle (0.03)
			(1.3,-1) circle (0.03)
			(2.6,-1) circle (0.03) ;
			
			\draw[red] (0,0) --(0.5,0)--(1.3,-1) --(3,0)--(3.5,0)--(2.6,-1) --(1,0)--(2.5,0);
			
			\node at (0,0.2) {$v_1$};
			\node at (0.5,0.2) {$v_2$};
			\node at (1,0.2) {$v_3$};
			\node at (3,0.2) {$v_{k-3}$};
			
			\node at (1.3,-1.25) {$u_1$};
			\node at (2.6,-1.25) {$u_2$};
			
			\node at (2,-1.75) {(1)};
		\end{tikzpicture}\quad
		\begin{tikzpicture}
			\filldraw
			(0,0) circle (0.03)
			--(0.5,0) circle (0.03)
			--++(0.5,0) circle (0.03)
			--++(1.5,0) circle (0.03)
			--++(0.5,0) circle (0.03)
			--(3.5,0) circle (0.03)
			(1.15,-1) circle (0.03)
			(2.3,-1) circle (0.03);
			
			\draw[red] (1,0)--(2.5,0)--( 1.15,-1)--(0.5,0)--(2.3,-1)--(3,0)--(3.5,0);
			
			\node at (0,0.2) {$v_1$};
			\node at (0.5,0.2) {$v_2$};
			\node at (1,0.2) {$v_3$};
			\node at (2.4,0.2) {$v_{k-3}$};
			
			\node at (1.15,-1.25) {$u_1$};
			\node at (2.3,-1.25) {$u_2$};
			
			\node at (1.75,-1.75) {(2)};  	
		\end{tikzpicture}\quad
		\begin{tikzpicture}
			\filldraw
			(0,0) circle (0.03)
			--(0.5,0) circle (0.03)
			--++(0.5,0) circle (0.03)
			--++(1.5,0) circle (0.03)
			--++(0.5,0) circle (0.03)
			--(4.5,0) circle (0.03)
			(1.5,-1) circle (0.03)
			(3,-1) circle (0.03);
			
			\draw[red] (1,0)--(2.5,0) --(1.5,-1)--(0.5,0)--(3,-1)--(3,0)--(4.5,0);	
			
			\node at (0,0.2) {$v_1$};
			\node at (0.5,0.2) {$v_2$};
			\node at (1,0.2) {$v_3$};
			\node at (2.3,0.2) {$v_{2\ell_1+2}$};
			\node at (3.57,-0.25) {$v_{2\ell_1+3}$};
			
			\node at (3,-1.25) {$u_1$};
			\node at (1.5,-1.25) {$u_2$};
			
			\node at (2.25,-1.75) {(3)};  	
		\end{tikzpicture}
		\caption{Illustration of a path $P_{k}$}
		\label{fig3}
	\end{figure}
	
	\vskip 0.2em

\noindent\textbf{Proof of Theorem \ref{max}}.
If $G$ is not $2$-connected, then by Lemma \ref{lem_block}, (2) and (3) hold. If $G$ is $2$-connected and has a strong dominating path, then by Lemma \ref{lem_dominate}, (1) holds. Therefore, we are left to consider the case when $G$ is 2-connected but has no strong dominating path. 

It is clear that $G$ has no strong dominating path implies that $G$ has no dominating cycle. By Lemma \ref{lem5}, $p(G) \geq
\min \{n,\sigma_3(G) - 1\}$. Since $G$ is $P_k$-free, we have
$$ k > \sigma_3 (G) - 1 \geq 3 \delta_k -1,$$
which implies $k \in \{ 3,4,5,6,7,9\}$. Let $P=v_1v_2 \cdots v_p$ be a longest path in $G$. Since $G$ has no strong dominating path, there is a component $G'$ in $G-P$ with $|G'| \geq  2$. Because $G$ is $2$-connected, we can find two vertices $x_1,x_t\in V(G')$ such that $x_1 v_i, x_t v_j \in E(G)$, $i<j$, and $Q=x_1 x_2 \cdots x_t$ is a longest path connecting $x_1$ and $x_t$ in $G'$. Obviously, $t \geq  2$. By the maximality of $P$, we have
$$\max\{i-1,j-i-1,p-j\}\geq t ,$$
which implies $p\geq 3t+2$. Since $k\leq 9$, we have $p\leq 8$, and hence $t=2$, $p=8$, $k=9$, $i=3$, $j=6$.

Now we show that $G'=K_2$.  If $G'\not=K_2$, then there exists some $x\in V(G')-\{x_1,x_2\}$ that is adjacent to $x_1$ or $x_2$, and so $v_1\overrightarrow{P} v_6x_2x_1x$ or $v_8\overleftarrow{P} v_3x_1x_2x$
is a path longer than $P$, a contradiction. Hence $G'=K_2$ and the component of $G-P$ is either $K_1$ or $K_2$.

Since $k=9$, $\delta(G) \geq\delta_k=3$. By the maximality of $P$, we can check that $N(x_1)=\{ x_2, v_3, v_6\}$ and $N(x_2)=\{ x_1, v_3, v_6\}$, which implies that $m\geq 4$. If  $G-P$ contains an isolated vertex $u$, then since $d(u)\geq 3$ and $u$ is not adjacent to $v_1$, $v_8$ and consecutive vertices in $P$, we have $\{v_3,v_6\}\not\subseteq N_P(u)$, and so there are at least two vertices in $N_P(u)\setminus \{ v_3, v_6\}$, which will result in a path longer than $P$ in $G$, a contradiction. Thus all components in $G-P$ are $K_2$. % result in 有点奇怪
By $\delta(G) \geq 3$ and the maximality of $P$, $v_3, v_6$ are neighbors of $v_1$, $v_2$, $v_4$, $v_5$, $v_7$ and $v_8$. Hence we have $G=I_2 \vee \frac{n-2}{2} K_2$ if $v_3v_6\notin E(G)$, or $G=K_2 \vee \frac{n-2}{2} K_2$ if $v_3v_6\in E(G)$ and $m\geq 5$, that is, (4) holds.

Therefore, we complete the proof of Theorem \ref{max}.$\hfill \qed$

\section{Generalized Tur\'an number}\label{sec4}

\hspace{1.5em}In this section, we consider the generalized Tur\'an number problem. First we need the following lemmas.
\begin{lem}\label{lem4}
	Let $G$ be a $\{P_k, K_m\}$-free graph of order $n$, $H$ be a $K_{m-1}$-free graph of order $\delta_k$ and $r\le \delta_k+1$. If $$G\subseteq \begin{cases}
		H \vee I_{n-\delta_k},	& \text{ if } k \text{ is even},  \\
		H \vee \left( I_{n-\delta_k-2} \cup K_2 \right),	& \text{ if } k \text{ is odd},
	\end{cases}$$ then there exists an integer $n_1$, such that when $ n > n_1$, $N_r(G) \leq N_r(H_n(m,k))$, with equality if and only if $G\cong H_n(m,k)$, or $G\cong H_n^-(m,k)$ with $m-2\le \delta_k\le 2m-5$ and $k$ odd.
\end{lem}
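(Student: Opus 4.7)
The plan is to count copies of $K_r$ in $G$ according to their intersection with $B := V(G)\setminus V(H)$, bounding each contribution via Zykov's theorem (Lemma \ref{ThKr}). Set $A := V(H)$ so that $|A| = \delta_k$, and let $T := T(\delta_k, m-2)$. The graph $G[A]$ is $K_{m-1}$-free, and the hypothesis forces $G[B]$ to be edgeless when $k$ is even and to contain at most one edge, say $xy$, when $k$ is odd. Since a clique cannot contain two nonadjacent vertices of $B$, every copy of $K_r$ meets $B$ in at most two vertices, and in two only via $\{x,y\}$; therefore
\[
N_r(G) = N_r(G[A]) + \sum_{b \in B} N_{r-1}(G[N_A(b)]) + \varepsilon\cdot N_{r-2}(G[S]),
\]
where $\varepsilon = 1$ precisely when $xy \in E(G)$ and $S := N_A(x)\cap N_A(y)$.

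Lemma \ref{ThKr} yields $N_r(G[A])\le N_r(T)$ and $N_{r-1}(G[N_A(b)])\le N_{r-1}(T)$ for every $b\in B$, with equality in the latter forcing $G[A] = T$ and $N_A(b) = A$. Because $G[S\cup\{x,y\}]$ is $K_m$-free, $G[S]$ is $K_{m-2}$-free, so $N_{r-2}(G[S]) \le N_{r-2}(T(|S|, m-3))$. Summing, for $k$ even we obtain $N_r(G) \le N_r(T) + (n-\delta_k) N_{r-1}(T) = N_r(H_n(m,k))$, with equality iff $G \cong H_n(m,k)$. For $k$ odd with $\varepsilon = 0$ the same bound holds; when $\delta_k\ge m-2$ it coincides with $N_r(H_n(m,k))$ and forces $G \cong H_n(m,k)$, while when $\delta_k\le m-3$ it falls strictly short of $N_r(H_n(m,k))$ by the $\binom{\delta_k}{r-2}$ term contributed by the $K_2$-edge present in $H_n(m,k)$.

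The delicate case is $k$ odd with $\varepsilon = 1$. If $G[A]\ne T$ then Zykov's uniqueness gives $N_{r-1}(G[A])\le N_{r-1}(T)-1$, inflicting a loss of at least $n-\delta_k$ in the middle sum which, for $n$ sufficiently large, dwarfs the bounded contribution $N_{r-2}(G[S])\le N_{r-2}(T(\delta_k,m-3))$; hence we may assume $G[A] = T$. Write $W_x := A\setminus N_A(x)$ and $W_y := A\setminus N_A(y)$, so $G[S] = T - (W_x\cup W_y)$ must be $K_{m-2}$-free, which forces $W_x\cup W_y$ to contain an entire part of $T$. Using the identity
\[
N_{r-1}(T) = N_{r-1}(T-v) + N_{r-2}(T-v),
\]
valid exactly when $\{v\}$ is a singleton part of $T$, a direct computation shows that the overall deficit $\sum_{b\in B}(N_{r-1}(T) - N_{r-1}(G[N_A(b)]))$ is at least $N_{r-2}(G[S])$, with equality iff $W_y = \emptyset$, $W_x = \{v\}$ for a singleton part $\{v\}$ of $T$, and every $b \in B\setminus\{x,y\}$ is fully adjacent to $A$. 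This configuration reproduces $H_n^-(m,k)$ and exists precisely when $T$ has a singleton part, i.e., $m-2 \le \delta_k \le 2m-5$. For $\delta_k \le m-3$ (and $\varepsilon = 1$) the $K_{m-2}$-freeness of $G[S]$ is automatic, and the bound is tight exactly at $G \cong H_n(m,k) = K_{\delta_k} \vee (I_{n-\delta_k-2}\cup K_2)$.

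The principal technical obstacle lies in verifying the strict inequality in the last paragraph whenever the configuration is not as described, namely when $|W_x|\ge 2$ inside one part, $W_y\ne\emptyset$, or $W_x$ equals a non-singleton part; in each such case a direct expansion of $N_{r-1}(T) - N_{r-1}(T - (W_x\cup W_y))$ into Turán subcounts yields strictly more than $N_{r-2}(G[S])$, ruling out further extremal graphs.
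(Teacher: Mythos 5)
Your argument is, in its main thrust, the same as the paper's: you decompose the copies of $K_r$ according to their intersection with $B=V(G)\setminus V(H)$, bound $N_r(G[A])$ and each $N_{r-1}(G[N_A(b)])$ by Zykov's theorem (Lemma \ref{ThKr}), and in the odd case observe that $K_m$-freeness forces a whole part $V_1$ of $T=T(\delta_k,m-2)$ to avoid $S=N_A(x)\cap N_A(y)$, so that the deficit of at least $|V_1|\,N_{r-2}(T-V_1)$ in the middle sum absorbs the gain $N_{r-2}(G[S])\le N_{r-2}(T-V_1)$ from the edge $xy$, with equality exactly when $|V_1|=1$, i.e.\ when $m-2\le\delta_k\le 2m-5$ and $G\cong H_n^-(m,k)$. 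That part is correct and essentially identical to the published proof.

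However, one step fails, and it is precisely the step at which the paper's own proof is also deficient: the reduction to $G[A]=T$ when $k$ is odd and $\varepsilon=1$. You claim that $G[A]\ne T$ forces $N_{r-1}(G[A])\le N_{r-1}(T)-1$ and hence a loss of $n-\delta_k$ in the middle sum. Lemma \ref{ThKr} requires the clique order to be at least $2$, so this is valid only for $r\ge 3$; for $r=2$ the quantity $N_{r-1}=N_1$ merely counts vertices, no loss is incurred in the middle sum, and the only penalty for $G[A]\ne T$ is the single missing copy in $N_2(G[A])\le N_2(T)-1$, which can be exactly cancelled by the extra edge $xy$. This is not a cosmetic gap. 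Take $k=7$, $m=4$, $r=2$, so $\delta_k=2$ and $H_n(4,7)=K_2\vee I_{n-2}$ has $2n-3$ edges. The graph $G_0=I_2\vee(I_{n-4}\cup K_2)$ satisfies the hypothesis with $H=K_2$, is $K_4$-free (a clique uses at most one vertex of the nonadjacent pair $I_2$ and at most the two vertices $x,y$ on the other side) and $P_7$-free (every edge except $xy$ meets the pair $I_2$, each of whose vertices lies on at most two path edges, so a path has at most $5$ edges), and has $2(n-2)+1=2n-3$ edges; yet $G_0$ is isomorphic to neither $H_n(4,7)$ nor $H_n^-(4,7)$. So for $r=2$ the stated equality characterization cannot be established by this route (and indeed appears to need amending), whereas for $r\ge 3$ your argument, like the paper's, goes through.
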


\begin{proof}
	
If $\delta_k+2<m< k$, then since $G$ is a subgraph of $H_n(m,k)$,  $N_r(G) \leq N_r(H_n(m,k))$.
If $m \leq \delta_k+2$ and $G \subseteq H \vee I_{n-\delta_k}$, then by Lemma \ref{ThKr},
	\begin{align*}
		N_r(G) \leq & N_r(H) + N_{r-1} (H) \cdot (n-\delta_k) \\
		\leq & N_r \left(T\left(\delta_k, m-2 \right) \right) + N_{r-1} \left(T\left(\delta_k, m-2 \right) \right) \cdot (n-\delta_k) 
		= N_r( H_n(m,k)).
	\end{align*}
	All equalities hold if and only if $G\cong H_n(m,k)$.
	
	Now, assume that $m \leq \delta_k+2$ and $G \subseteq H \vee \left( I_{n-\delta_k-2} \cup K_2 \right)$. In this case, $k$ is odd. Let $e$ denote the only edge in $G-V(H)$.
	First we suppose $H= T\left(\delta_k, m-2 \right)$. Then since $G$ is $K_m$-free, there exists one partite $V_1$ of $T\left(\delta_k, m-2 \right)$ such that any vertex of $V_1$ is adjacent to at most one end of $e$, which leads to a reduction in the number of $K_r$ in $G$ of at least $|V_1| \cdot N_{r-2} ( T\left(\delta_k-|V_1|, m-3 \right))$ compared to $T\left(\delta_k, m-2 \right) \vee I_{n-\delta_k} $. Note that the number of $K_r$ containing $e$ in $G$ is at most $N_{r-2} ( T\left(\delta_k-|V_1|, m-3 \right))$.
	When $r\geq 3$, we have
 \begin{align*}
&N_r(H_n(m,k)) -N_r(G)\\
\geq &|V_1|\cdot N_{r-2} ( T\left(\delta_k-|V_1|, m-3 \right)) -N_{r-2} ( T\left(\delta_k-|V_1|, m-3 \right))\geq 0.
\end{align*}
When $r=2$, then
$$e(G)\leq e\left(T\left(\delta_k, m-2 \right) \vee \left( I_{n-\delta_k-2} \cup K_2 \right) \right)-|V_1|\le  e(H_n(m,k)).$$
	The equalities hold if and only if $|V_1|=1$, that is, we have $m-2\le \delta_k\le 2m-5$ and $G\cong H_n^-(m,k)$.
	
	If $H\neq T\left(\delta_k, m-2 \right)$, then by Lemma \ref{ThKr}, $N_{r-1} (H) < N_{r-1} (  T\left(\delta_k, m-2 \right) )$. Thus there exists an integer $n_1$, such that when $ n > n_1$, we have
	\begin{align*}
		N_r(G)\leq &  N_{r-1} (H) \cdot (n-\delta_k)+ N_r(H) + N_{r-2}(H) \\
		<&  N_{r-1} \left(T\left(\delta_k, m-2 \right) \right) \cdot (n-\delta_k) +N_r \left(T\left(\delta_k, m-2 \right) \right) \\
		=& N_r( H_n(m,k)).
	\end{align*}
	
	The proof is complete.	
\end{proof}

\begin{lem}\label{Lem41}Let $2\le r\le \delta_k+1$, we have

{\rm  (1)} $N_r( G_i(n,k) ) \leq  N_r(H_n(m,k) )$ for $i=1,2,3$,  with equality if and only if $r=2$ and $G_i(n,k)\cong H_n(m,k)$ for $i=1,3$, or $G_2(n,k)\cong S_{a,n-a}$ with $k=5$, $m=3$;

{\rm (2)}  $N_r( G_i(n) ) < N_r(K_2\vee I_{n-2})$ for $i=4,5$;

{\rm (3)} $N_r(I_2 \vee \frac{n-2}{2} K_2 ) < N_r(K_3 \vee I_{n-3})$ and $N_r(K_2 \vee \frac{n-2}{2} K_2 ) < N_r(K_3 \vee I_{n-3})$ for $n>4$.
\end{lem}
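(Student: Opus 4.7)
The proof of Lemma~\ref{Lem41} proceeds by direct enumeration of $K_r$-copies in each graph, followed by an elementary algebraic comparison. The central observation is that for $r\ge 2$ every $K_r$ lies entirely within a single block of its host graph, so in Part~(1) the graphs $G_1, G_2, G_3$---which have cut vertices and an explicit block decomposition---can be counted blockwise.

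For Part~(1), the plan is to write out
\[ N_r(G_1(n,k))=\tfrac{n-1}{\delta_k}\binom{\delta_k+1}{r},\qquad N_r(G_2(n,k))=\tfrac{n-2}{\delta_k}\binom{\delta_k+1}{r}+\mathbf{1}[r=2], \]
and $N_r(G_3(n,k))=\tfrac{n-\delta_k-2}{\delta_k}\binom{\delta_k+1}{r}+N_r(B)$, where $B$ is the $K_m$-free block of order $\delta_k+2$, controlled via Lemma~\ref{ThKr} by $N_r(B)\le N_r(T(\delta_k+2,m-1))$. Since the $G_i(n,k)$ arise only when $m\ge\delta_k+2$, the comparison graph satisfies $H_n(m,k)\supseteq K_{\delta_k}\vee I_{n-\delta_k}$, so $N_r(H_n(m,k))\ge\binom{\delta_k}{r}+(n-\delta_k)\binom{\delta_k}{r-1}$, with an additional $\binom{\delta_k}{r-2}$ term when $k$ is odd and $m>\delta_k+2$. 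Forming $N_r(H_n(m,k))-N_r(G_i)$ and simplifying, the case $i=1$ reduces to the elementary inequality $(\delta_k-1)\binom{\delta_k}{r-1}\ge\binom{\delta_k}{r}$, equivalent to $(r-1)\delta_k\ge 1$, which holds with equality exactly when $r=2$ and $\delta_k=1$; the cases $i=2,3$ are analogous with the extra terms tracked. The equality case $\delta_k=1$ collapses all blocks to single edges, giving $G_1\cong K_{1,n-1}$ and $G_2\cong S_{a,n-a}$ (which forces $k=5,m=3$), while for $G_3$ no valid block $B$ exists when $\delta_k=1$ so equality is vacuous---matching exactly the configurations listed in the lemma.

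For Parts~(2) and~(3), $k\in\{7,9\}$ and $r\le\delta_k+1\le 4$ are both small, so I would enumerate $K_r$ directly by finite case check. In $G_4(n)$ and $G_5(n)$ the $K_r$'s decompose into those contained in the $K_3$-blocks at the central vertex and a handful arising from the attached star (plus one triangle per leaf in $G_5$), yielding bounds affine in $n$ with a small leading coefficient strictly dominated by $N_r(K_2\vee I_{n-2})=\binom{2}{r}+(n-2)\binom{2}{r-1}$. For Part~(3), any $K_r$ in $I_2\vee\tfrac{n-2}{2}K_2$ uses at most one vertex from the independent side and at most two matching-vertices (which must form one matching edge), giving closed-form values strictly below $N_r(K_3\vee I_{n-3})=\binom{3}{r}+(n-3)\binom{3}{r-1}$ whenever $n>4$; the graph $K_2\vee\tfrac{n-2}{2}K_2$ adds only one extra edge and is treated identically.

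The main obstacle is the bookkeeping for Part~(1) case $i=3$: the replacement block $B$ can take several shapes depending on $m$ and $\delta_k$, and one must combine Lemma~\ref{ThKr} with the base inequality to verify that no stray equality case survives. Elsewhere the proof is routine computation, but some vigilance is needed in the equality analysis to isolate the degenerate $\delta_k=1$ regime---where $G_1,G_2$ degenerate into stars and double stars---from the generic case in which strict inequality always holds.
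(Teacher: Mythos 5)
Your blockwise counting and the reduction to $(r-1)\delta_k\ge 1$ (equivalently $\binom{\delta_k+1}{r}\le\delta_k\binom{\delta_k}{r-1}$) is exactly the engine of the paper's proof, and your treatment of Parts (2) and (3) by direct enumeration is fine (the paper dismisses these as ``simple calculation''). The inequality halves of Part (1) would all go through as you describe.

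However, your equality analysis for $i=3$ is wrong, and it contradicts the very statement you are proving. You claim that ``for $G_3$ no valid block $B$ exists when $\delta_k=1$ so equality is vacuous,'' but the lemma explicitly lists $G_3(n,k)\cong H_n(m,k)$ as an attainable equality configuration. Indeed, take $k=5$, $m=4$ (so $\delta_k=1$ and $m>\delta_k+2$): the block $B$ must be a $2$-connected $K_4$-free graph on $\delta_k+2=3$ vertices, and $B=K_3$ qualifies. Then $G_3(n,5)$ is a star with one pendant edge completed to a triangle, which is precisely $H_n(4,5)=K_1\vee(I_{n-3}\cup K_2)$, and equality holds at $r=2$. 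Your claim of nonexistence is only correct in the subcase $m=\delta_k+2$ (there a $2$-connected $K_3$-free graph on $3$ vertices cannot exist), which is how the paper rules out equality for $G_3$ when $m=\delta_k+2$; when $m>\delta_k+2$ the constant term in your comparison, $\binom{\delta_k}{r}+2\binom{\delta_k}{r-1}+\binom{\delta_k}{r-2}-N_r(B)$, vanishes exactly when $B=K_{\delta_k+2}$, and combined with $(r-1)\delta_k=1$ this yields the genuine equality case $r=2$, $k=5$, $m=4$. Relatedly, for $i=1$ you must actually use the extra $\binom{\delta_k}{r-2}$ term you mention in passing: without it, your criterion ``$r=2$ and $\delta_k=1$'' would wrongly admit $G_1(n,5)=K_{1,n-1}$ as extremal for $m=4$, where $H_n(4,5)$ has one more edge. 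So the skeleton is right, but the equality bookkeeping---the part of this lemma that feeds the extremal-graph characterizations in Theorems \ref{gener} and \ref{nonconnect}---needs to be redone case by case in $m$ versus $\delta_k+2$.
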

\begin{proof}
	By simple calculation, one can get (2) and (3). Now we show that (1) holds.
	
	 Recall $\delta_k \leq m-2$ by the definition of $G_i(n,k)$, $H_n(m,k)=K_{\delta_k}\vee I_{n-\delta_k}$ if $k$ is even and $m>\delta_k+2$, 
  $H_n(m,k)= K_{\delta_k} \vee ( I_{n-\delta_k-2}\cup K_2)$ if $k$ is odd and $m>\delta_k+2$, and $H_n(m,k)=T(\delta_k,m-2)\vee I_{n-\delta_k}=K_{\delta_k}\vee I_{n-\delta_k}$ if $m=\delta_k+2$.
	
Note that $G_1(n,k)=K_1\vee tK_{\delta_k}$, to show that $N_r(G_1(n,k)) \leq N_r \left(H_n(m,k) \right)$,  it suffices to prove
	\begin{equation}\label{eqb1}
		N_r(G_1(n,k))=t\cdot N_r \left( K_{ \delta_k+1}  \right)\le N_r \left( K_{\delta_k} \vee I_{(t-1) \delta_k+1}  \right).
	\end{equation}
It is clear that (\ref{eqb1}) holds when $t=1$. For $t\geq 2$, the inequality (\ref{eqb1}) is equivalent to
\begin{equation*}\label{eqb2}
	(t-1)\cdot N_r \left( K_{ \delta_k+1}  \right)\leq (t-1) \delta_k \cdot N_{r-1} \left( K_{\delta_k} \right).
\end{equation*}
Since $2\leq r\leq \delta_k+1$, we have $\delta_k \geq 1\ge \frac{1}{r-1}$, which implies
$$(t-1){\delta_k+1\choose r}=(t-1)\cdot \frac{ \delta_k+1 }{r} \binom{\delta_k}{r-1} \leq  (t-1)  \delta_k  \cdot \binom{\delta_k}{r-1}.$$
Hence $N_r( G_1(n,k) ) \le  N_r(H_n(m,k) )$, equality holds if and only if $\delta_k=1= \frac{1}{r-1}$, that is, $r=2,~k=4,5,~m=3$ and $G_1(n,k)\cong H_n(m,k)$.

Suppose that $G_2(n,k)$ contains $t$ blocks. In this case, $k$ is odd by the structure of $G_2(n,k)$ in Lemma \ref{lem_block}. Then by (\ref{eqb1}),
 $$N_r(G_2(n,k))=N_r(G_1(n-1,k)) \le  N_r \left(K_{\delta_k} \vee I_{n-\delta_k-1} \right)< N_r \left(H_n(m,k) \right)$$ if $r\geq 3$, and if $r=2$, then
\begin{align*}
N_2(G_2(n,k))=&N_2(G_1(n-1,k))+1 \le  N_2 \left(K_{\delta_k} \vee I_{n-1-\delta_k } \right) +1 \\
=& N_2 \left(K_{\delta_k} \vee I_{n-\delta_k}\right) +1-\delta_k\le N_2(H_n(m,k)),
 \end{align*}
equalities hold if and only if $k=5, m=3$, that is $G_2(n,k)$ is a double star.

Suppose $G_3(n,k)$ has $t$ blocks. In this case, $k$ is odd. When $m>\delta_k+2$, $H_n(m,k)= K_{\delta_k} \vee ( I_{n-\delta_k-2}\cup K_2)$. By the definition of $G_3(n,k)$ and (\ref{eqb1}), we have
\begin{align*}
N_r(G_3(n,k))&\le N_r(K_1 \vee ( (t-1) K_{\delta_k} \cup K_{\delta_k+1} )) \\
&= N_r(G_1(n-1,k))+\binom{\delta_k+1}{r-1}\\
&\le N_r(K_{\delta_k}\vee I_{n-1-\delta_k})+\binom{\delta_k}{r-1}+\binom{\delta_k}{r-2}\\
&= N_r(H_n(m,k)),
\end{align*}
equalities hold if and only if $r=2$, $k=5$, $m=4$ and $G_3(n,k)\cong H_n(m,k)$.
When $m = \delta_k+2$, we have $H_n(m,k)= K_{\delta_k} \vee ( I_{n-\delta_k})$ and the block $B$ of order $\delta_k+2$ in $G_3(n,k)$ must be a proper subgraph of $K_{\delta_k+2}$. Let $u$ be a vertex in $B$ such that $u$ is not adjacent to some other vertex in $B$. Because the number of $K_r$ containing $u$ is at most $N_{r-1} \left(K_{\delta_k} \right)$ in $B$, we have
\begin{align*}
N_r(G_3(n,k))& \le N_r(G_3(n,k)-u )+\binom{\delta_k}{r-1}\\
& \le N_r(G_1(n-1,k))+\binom{\delta_k}{r-1}\\
&\le N_r(K_{\delta_k}\vee I_{n-1-\delta_k})+\binom{\delta_k}{r-1}= N_r(H_n(m,k)),
\end{align*}
equalities hold if and only if $r=2$, $k=5$, $m=3$, which implies the block $B$ is a proper subgraph of $K_{\delta_k+2}=K_3$, a contradiction.

The proof is complete.
\end{proof} 

In the proofs of Theorems \ref{gener} and  \ref{nonconnect}, we define $\ex(n,K_r, K_m)=\binom{n}{r}$ and $T(n,m)=K_n$ if $m\geq n$ for convenience.
\vskip 2mm
\noindent\textbf{Proof of Theorem \ref{gener}}.
Let $G$ be a connected $\{P_k, K_m \}$-free graph of order $n$ with
$N_r(G)=\ex_{con}(n, K_r, \{P_k, K_m \})$. Then
\begin{equation}\label{eq44}
 N_r(G) \geq N_r(H_n(m,k)).
\end{equation}

Recall that $\delta_k= \lfloor \frac{k}{2} \rfloor -1$. We define a process of $G$, called ($\delta_k -1$)-disintegration as follows: for
$j = n$, let $G^n = G$, and for $j < n$, let $G^j$ be obtained from $G^{j+1}$ by deleting a vertex of degree less than $\delta_k$ in $G^{j+1}$ if such a vertex exists. The process terminates at $G^t$ when $\delta(G^t)\geq \delta_k$ or $t=0$. We call $G^t$ the $ \delta_k $-core of $G$. 

Now we show that it suffices to consider the case where $G^t$ is connected. For $j>t$, if $G^j$ is $2$-connected, then $G^{j-1}$ is still connected after deleting a vertex of degree less than $\delta_k$ in $G^j$. Suppose that $G^j$ is not $2$-connected. To ensure that $G^{j-1}$ is connected, we delete a vertex of degree less than $\delta_k$ in the end blocks of $G^j$ that is not a cut vertex. If there is no such vertex, then by Lemma \ref{lem_block} and Lemma \ref{Lem41}, $N_r(G^j) \leq N_r(H_j(m,k))$. Since deleting one vertex during the process of ($\delta_k -1$)-disintegration destroys at most $\ex(\delta_k-1,K_{r-1},K_{m-1})$ copies of $K_r$, we have
\begin{align*}
	N_r(G) & \leq N_r(G^j)+(n-j) \cdot \ex(\delta_k-1,K_{r-1},K_{m-1})\\
& < N_r(H_j(m,k))+(n-j) \cdot N_{r-1} (T(\delta_k,m-2)) \leq N_r(H_n(m,k)).
\end{align*}
Thus we only need to consider the case where $G^t$ is connected.

If $t=0$, then since $n$ is sufficiently large, we have
\begin{align*}
	N_r(G) & \leq \ex(\delta_k, K_r, K_m) +(n-\delta_k) \cdot \ex(\delta_k-1, K_{r-1}, K_{m-1}) \\
	& < \ex(\delta_k, K_r, K_{m-1}) +(n-\delta_k) \cdot \ex(\delta_k, K_{r-1}, K_{m-1})\leq N_r(H_n(m,k)),
\end{align*}
which contradicts (\ref{eq44}). Thus $\delta (G^t) \geq \delta_k$ and $t\geq \delta_k+1$.

Recall that $r\!\leq\! \min\{m-1, \delta_k+1\}$. Since $N_r(G^{j+1})-N_r(G^j)\!\leq  \ex(\delta_k-1,K_{r-1},K_{m-1})$ and $N_r(H_{j+1} (m,k)) -N_r(H_j(m,k)) \geq N_{r-1}(T(\delta_k,m-2))=\ex(\delta_k,K_{r-1},K_{m-1})$ for $t\leq j \leq n-1$, we have
$$N_r(G^j)- N_r(H_j(m,k)) \geq N_r(G^{j+1}) - N_r(H_{j+1} (m,k)) +1.$$
Thus
$$N_r(G^t)- N_r(H_t(m,k)) \geq N_r(G^{n}) - N_r(H_n (m,k)) +(n-t) ,$$
which implies that
$$N_r(G^t) \geq n-t+ N_r(H_t(m,k)) \geq n-\delta_k .$$
On the other hand, since $G^t$ is $P_k$-free, by Theorem \ref{luo}, we have
$$N_r(G^t)\leq  \frac{t}{k-1} \binom{k-1}{r}.$$
Hence, $$t\geq \frac{(n-\delta_k)(k-1)}{\binom{k-1}{r}}.$$
Let $n> \binom{k-1}{r}\cdot \frac{n_1}{k-1} +\delta_k$, then $t\geq n_1$. By Theorem \ref{max}, Lemmas \ref{lem4} and  \ref{Lem41}, we have $N_r(G^t) \leq N_r(H_t(m,k))$. Hence,
\begin{align*}
&N_r(G)\leq N_r(G^t)+(n-t) \cdot N_{r-1} (T(\delta_k-1,m-2))\\
\leq& N_r(G^t)+(n-t) \cdot N_{r-1} (T(\delta_k,m-2)) \leq N_r(H_n(m,k)),
\end{align*}
 with equality if and only if $n=t$ and $G\cong H_n(m,k)$, $G\cong H_n^-(m,k)$ with $m-2\le \delta_k\le 2m-5$ and $k$ odd, or $G\cong S_{a,n-a}$ with $k=5$, $m=3$ and $r=2$.

The proof is complete. $\hfill \qed$
\vskip 2mm
\noindent\textbf{Proof of Theorem \ref{nonconnect}}. We apply ($\delta_k -1$)-disintegration to $G$. Let $G^t$ be the $ \delta_k $-core of $G$ and $G_i (1 \leq i \leq s)$ be the connected components of $G^t$ with $\ell_i = |G_i|$.

Let $n_1$ be an integer such that Lemma \ref{lem4} holds.

If $\ell_i > n_1$, then by Theorem \ref{max}, Lemmas \ref{lem4} and  \ref{Lem41},
\begin{equation}\label{eq41}
N_r(G_i) \leq N_r(H_{\ell_i}(m,k)).
\end{equation}

If $k\leq \ell_i \leq n_1$, then by Theorem \ref{max}, Lemmas \ref{Lem41} and  \ref{ThKr},
	\begin{align*}
		&N_r(G_i)  \leq N_r \left( 	T \left( \delta_k, m-2 \right)  \vee \left( I_{\ell_i -\delta_k-2} \cup K_2 \right)  \right) \\
		=&  N_{r-1} \left(T \left(\delta_k, m-2 \right) \right)   \cdot  (\ell_i-\delta_k)
		 +N_{r} \left(T \left(\delta_k, m-2 \right) \right) +N_{r-2} \left(T \left(\delta_k, m-2 \right) \right)  \\
		< & N_{r-1} \left(T \left(\delta_k, m-2 \right) \right)   \cdot  \ell_i.
	\end{align*} 
The last inequation follows from the fact that $\delta_k  \cdot N_{r-1} \left(T \left(\delta_k, m-2 \right) \right) > N_{r} \left(T \left(\delta_k, m-2 \right) \right) \\ + N_{r-2} \left(T \left(\delta_k, m-2 \right) \right)$ since $\delta_k  \cdot N_{r-1} \left(T \left(\delta_k, m-2 \right) \right)$ can be viewed as the number of combinations of any vertex and any $K_{r-1}$ in $T \left(\delta_k, m-2 \right)$ and removing any vertex of $K_{r-1}$ will obtain a $K_{r-2}$ while adding any vertex to $K_{r-1}$ yields a $K_r$.
Since $\ell_i \leq n_1$, there exists a small constant $\varepsilon$ such that
\begin{equation}\label{eq42}
	N_r(G_i) \leq (N_{r-1} (T(\delta_k, m-2)) -\varepsilon ) \cdot \ell_i.
\end{equation}

If $\ell_i <k$, then 
\begin{equation}\label{eq43}
 N_r(G_i) \leq   \frac{ N_r(T(\ell_i, m-1) )}{\ell_i} \cdot \ell_i  \leq \frac{ N_r(T(k-1,m-1))}{k-1} \cdot \ell_i ,
\end{equation}
where the last inequation follows from the fact that $\frac{ N_r(T(\ell_i, m-1) )}{\ell_i} \leq \frac{ N_r(T(\ell_i+1, m-1) )}{\ell_i+1} $ since the average number of $K_r$ containing a vertex $v$ in $T(\ell_i+1, m-1)$ is not less than that in $T(\ell_i, m-1)$ (Note that $T(\ell_i+1, m-1)$ can be obtained by adding a new vertex $u$ to a part of size $\lfloor \frac{\ell_i}{m-1} \rfloor$ in $T(\ell_i, m-1)$ and the number of $K_r$ containing $u$ in $T(\ell_i+1, m-1)$ is not less than the average number of $K_r$ containing a vertex $v$ in $T(\ell_i, m-1)$ .).

When $N_{r-1} ( T(\delta_k, m-2)) >\frac{ N_r(T(k-1,m-1))}{k-1}$, for $\ell_i <k$, by (\ref{eq43}), we have
\begin{equation}\label{eq4}
	N_r(G_i) \leq \left(N_{r-1} \left(T \left(\delta_k, m-2 \right) \right) -\varepsilon \right)\cdot \ell_i.
\end{equation}
If there is at least one $\ell_i >n_1$, then by (\ref{eq41}), (\ref{eq42}) and (\ref{eq4}), we have $N_r(G^t)=\sum_{i} N_r(G_i) \leq   N_r(H_t(m,k))$, with equality if and only if $G^t$ is connected with $t>n_1$. If $\ell_i \leq n_1$ for $1 \leq i \leq s$, then
$$N_r(G^t)=\sum_{i} N_r(G_i) \leq \sum_{i} \left(N_{r-1} \left(T \left(\delta_k, m-2 \right) \right) -\varepsilon \right)\cdot  \ell_i =t\cdot\left(N_{r-1} \left(T \left(\delta_k, m-2 \right) \right) -\varepsilon \right).$$ Thus there exists an $n_2$ such that when $t >n_2$, $N_r(G^t) < N_r(H_t(m,k))$. By a discussion of ($\delta_k -1$)-disintegration similar to that as in the proof of Theorem \ref{gener}, we have $N_r(G) \leq N_r(H_n(m,k))$, and the equality holds if and only if $n=t$ and $G\cong H_n(m,k)$, $G\cong H_n^-(m,k)$ with $m-2\le \delta_k\le 2m-5$ and $k$ odd, or $G\cong S_{a,n-a}$ with $k=5$, $m=3$ and $r=2$.

When $N_{r-1} ( T(\delta_k, m-2)) \leq \frac{ N_r(T(k-1,m-1))}{k-1}$, then by (\ref{eq41}), (\ref{eq42}) and (\ref{eq43}), we have $$N_r(G^t) = \sum_{i} N_r(G_i) \leq  \sum_{i}
\frac{ N_r(T(k-1,m-1))}{k-1} \cdot  \ell_i =\frac{ N_r(T(k-1,m-1))}{k-1} \cdot t.$$ Thus $N_r(G)\leq N_r(G^t)+(n-t) \cdot N_{r-1} (T(\delta_k-1,m-2)) \leq \frac{ N_r(T(k-1,m-1))}{k-1} \cdot n$. When $(k-1) | n$, the equality holds and $\frac{n}{k-1} \cdot T(k-1, m-1)$ is an extremal graph.

The proof is complete. $\hfill \qed$

\vskip 2.5em
\section*{Acknowledgments}

 This research was supported by NSFC under grant numbers  12161141003 and 11931006.  

\vskip 5mm
\noindent\textbf{Data availability statement} \  Data sharing not applicable to this article as no datasets were generated or analyzed during the current study.

\vskip 5mm
\noindent\textbf{Declarations of conflict of interest}\  The authors declare that they have no known competing financial interests or personal relationships that could have appeared to influence the work reported in this paper.

\end{document}